\documentclass{amsart}
\usepackage[utf8]{inputenc}
\usepackage{amsmath,mathtools,amssymb,amsthm}
\usepackage{tikz-cd}
\usepackage{mathrsfs}
\usepackage[colorlinks=false, linkcolor=red]{hyperref}
\usepackage[shortlabels]{enumitem}
\usepackage{blkarray}

\theoremstyle{plain}

\newtheorem{theorem}{Theorem}[section]
\newtheorem{proposition}[theorem]{Proposition}
\newtheorem{lemma}[theorem]{Lemma}
\newtheorem{corollary}[theorem]{Corollary}
\newtheorem{remark}[theorem]{Remark}

\newcommand{\ra}{\rightarrow}

\newcommand{\OO}{\mathcal{O}}
\newcommand{\PP}{\mathbb{P}}
\newcommand{\EE}{\mathcal{E}}
\newcommand{\HH}{\mathcal{H}}

\newcommand{\PT}{\widehat{T}}

\DeclareMathOperator{\Sym}{\textrm{Sym}}
\DeclareMathOperator{\Ker}{\textrm{Ker}}
\DeclareMathOperator{\Pic}{\textrm{Pic}}

\DeclareMathOperator{\Hilb}{\textrm{Hilb}}

\title{Lines on the secant cubic hypersurfaces of Severi varieties}
\author{Renjie Lyu}
\address{MCM, Academy of Mathematics and Systems Science, Chinese Academy of Sciences, Beijing
100190, China}
\email{r.lyu@amss.ac.cn}
\date{\today}

\begin{document}

\begin{abstract} The secant varieties of Severi varieties provide special examples of (singular) cubic hypersurfaces. An interesting question asks when a given cubic hypersurface is projectively equivalent to a secant cubic hypersurface. Inspired by the ``geometric'' Torelli theorem for smooth cubic hypersurfaces due to F.~Charles, we study the geometry of lines on secant cubics, and describe the Fano variety of lines. Then we verify the ``geometric'' Torelli theorem for the case of secant cubics. Namely, a cubic hypersurface is isomorphic to a secant cubic if and only if their Fano varieties of lines are isomorphic.
\end{abstract}

\maketitle
\tableofcontents

\section*{Introduction} 
The variety of lines on a cubic hypersurface is a notable geometric object with intrinsic and extrinsic geometric features to understand the cubic. Among cubic hypersurfaces we are particularly interested in a special example, which is constructed from the Severi varieties. 

Let $k$ be a field of characteristic zero. Let $X\subset \PP^N_k$ be a nonsingular nondegenerate projective variety. We call $X^n\subset \PP^N$ a \emph{Severi variety} if $n=\frac{2(N-2)}{3}$ and the \emph{secant variety} $Sec(X)$ of $X$ 
satisfies $Sec(X)\neq \PP^N$. This notion relates to R.~Hartshorne's conjecture on complete intersections. F.~Zak solved a weak version of the conjecture by proving: 

\emph{a nonsingular nondegenerate projective variety $X^n\subset \PP^N$ with $\dim X=n>\frac{2(N-2)}{3}$ has $Sec(X)=\PP^N$,} see \cite[\S II Thm. 2.1]{Zak93}. 

Therefore the Severi variety is the boundary example among the subvarieties whose projections are closed embeddings, i.e. $Sec(X)\neq \PP^N$. For example, the Veronese surface in $\PP^5$ is a Severi variety. A classical result of F.~Severi \cite{Severi01} asserts that it is the only Severi variety in dimension $2$. T.~Fujita and J.~Robert \cite{Fuji-Rob81} found and proved the Segre fourfold $\PP^2\times \PP^2\subset \PP^8$ is the unique $4$-dimensional Severi variety. R.~Lazarsfeld \cite{LV} discovered a $16$-dimensional example: the Cartan variety $E_6\subset \PP^{26}$ given by an algebraic group of type $E_6$. From the viewpoint of representations of algebraic groups, F.~Zak provided a unified way to prove that \cite[\S IV, Thm. 4.7]{Zak93}, up to projective equivalence, there exist only four Severi varieties:
\begin{enumerate}
	\item $n=2$, $\PP^2\hookrightarrow \PP^5$, the Veronese surface;
	\item $n=4$, $\PP^2\times \PP^2\hookrightarrow \PP^8$, the Segre fourfold;
	\item $n=8$, $G(1,5)\hookrightarrow \PP^{14}$, the Pl\"ucker embedding of the Grassmannian of lines in $\PP^5$;
	\item $n=16$, $E_{6}\hookrightarrow \PP^{26}$, the closure of the orbit of a highest weight vector of the minimal representation of type $E_6$.
\end{enumerate}

The \textit{secant variety} $Sec(X)\subset \PP^N$ of the Severi variety $X$ is a singular cubic hypersurface, and the singular locus of $Sec(X)$ is exactly $X$, cf. \cite[Prop. 2.3]{ESB89} and \cite[\S IV, Thm 2.4]{Zak93}. The paper \cite{FYZ10} studies an interesting question asking when a given cubic hypersurface $V$ is projectively equivalent to the secant variety of a Severi variety. In that paper, they concern the question by investigating the invariant called ``prolongation'' on the Lie algebra of the infinitesimal automorphisms of a cubic polynomial, which is developed by J.~Hwang \cite{Hwang-prolong}. 

In this article, we intend to explore this question by the Fano variety of lines on the secant cubic hypersurface. This is inspired by the ``geometric'' Torelli theorem due to F.~Charles, which says that a smooth cubic hypersurface of dimension $d\geq 3$ is determined by the Fano variety of lines, cf. Theorem \ref{Charles-geo-Torelli} and \cite[Prop. 2.8]{Huy-cubic}. The theory of the Fano variety of lines on a smooth cubic hypersurface is systematically studied by Altman and Kielman in their work \cite{AK77}. For a smooth hypersurface $Y$ of dimension $d\geq 3$, the Fano variety $F(Y)$ of lines is a local complete intersection of the Grassmannian of lines and smooth with dimension $2d-4$. For cubics with singularities, the behavior of the variety of lines could be ``terrible'', e.g. dimension jumps and not linearly normal under the Pl\"ucker embedding. For the secant cubic hypersurface, we analyze the geometry of lines by virtue of the special geometry on the Severi variety. Though the secant cubic has relatively large singularities, its Fano variety of lines turns out not ``bad''.

The secant variety $SX$ of the Veronese surface $X\subset \PP^5$ is a cubic fourfold. A projective line $L\subset \PP^5$ contained in $SX$ is a line either secant(tangent) to $X$ or non-secant to $X$, i.e. $L\cap X=\emptyset$. By studying the non-secant lines we determine the Fano variety of lines on the secant cubic fourfold

\begin{theorem}[=Proposition \ref{lines-secant-Veronese}]\label{intro-thm1}
Let $X$ be the Veronese surface. The variety $F(SX)$ of lines on the secant variety $SX$ is the union of two irreducible components $\mathcal{C}_i, i=1,2$. The component $\mathcal{C}_1$ parametrizes all lines secant to $X$, which is isomorphic to $\Hilb^2(X)$. The family of non-secant lines in $SX$ forms a Zariski open subset of the component $\mathcal{C}_2$, and $\mathcal{C}_2$ is a $\PP^2$-bundle over $X$. Moreover, the intersection $\mathcal{C}_1\cap \mathcal{C}_2$ is the family of tangent lines to $X$.
\end{theorem}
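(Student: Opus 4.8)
The plan is to work in the model $\PP^5=\PP(\Sym^2 V)$ with $\dim_k V=3$, so that $X=\{[v^2]:v\in V\}$ is the locus of rank-one symmetric tensors, $SX=\{[A]:\det A=0\}$ is the locus of rank $\le 2$, and $X=\mathrm{Sing}(SX)$. A line $L\subset\PP^5$ is then a pencil of symmetric matrices, and I would classify the lines $L\subset SX$ by their scheme-theoretic intersection $L\cap X$. Two families of planes inside $SX$ organize the whole picture: the \emph{conic planes} $\PP(\Sym^2 U)$ for $U\subset V$ with $\dim U=2$ (these are the linear spans $\langle C_U\rangle$ of the conics $C_U=\{[u^2]:u\in U\}\subset X$, and $\PP(\Sym^2U)\cap X=C_U$), and the \emph{tangent planes} $\Tan_{x}X=\PP(v\cdot V)$ for $x=[v^2]\in X$, where $v\cdot V=\{vw+wv:w\in V\}$ is the affine cone of the embedded tangent plane of the Veronese (here $\Tan_x X\cap X=\{x\}$). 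I would show that $\mathcal{C}_1$ is exactly the family of lines lying in a conic plane and $\mathcal{C}_2$ the family of lines lying in a tangent plane.

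First I would pin down $L\cap X$. Since no three distinct points of $X$ are collinear (three pairwise non-proportional rank-one matrices are linearly independent), $L$ meets $X$ in at most two points; and because $X=\mathrm{Sing}(SX)$, a short local computation with the $2\times2$ minors shows that whenever $L$ meets $X$ the quadrics cutting out $X$ vanish to order $\ge 2$, so $L\cap X$ is either empty or a length-two subscheme. This yields the trichotomy secant / tangent / non-secant. For $\mathcal{C}_1$ I would send a length-two subscheme $Z\subset X$ to the line $\langle Z\rangle$ it spans; one checks $\langle Z\rangle\subset SX$ and that $Z\mapsto\langle Z\rangle$ and $L\mapsto L\cap X$ are mutually inverse morphisms, giving $\mathcal{C}_1\cong\Hilb^2(X)$. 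The compatibility with the plane picture is the observation that each secant/tangent line lies in the unique conic plane spanned by the conic through $Z$; concretely $\Hilb^2(X)\to\PP(V^\ast)$, $Z\mapsto\overline Z$ (the line of $\PP(V)\cong X$ spanned by $Z$), has fibre $\Hilb^2(\PP^1)\cong\PP^2$, matching the $\PP^2$ of lines in a fixed conic plane.

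The core of the argument is $\mathcal{C}_2$. For a non-secant line $L$ every $A\in L$ has rank exactly two, so there is a regular kernel map $\kappa\colon L\to\PP(V^\ast)$, $A\mapsto\ker(A\colon V^\ast\to V)$. If $\kappa$ were constant, $L$ would lie in $\PP(\Sym^2U)$ for $U=\mathrm{Ann}(\ker)$ and would be secant; so $\kappa$ is non-constant, and a computation with the adjugate (which along $L$ is a symmetric rank-one matrix with a linear ``square root'') shows $\kappa$ is an isomorphism onto a line $m_L=\PP(W_L^\ast)\subset\PP(V^\ast)$. Setting $\mathrm{Ann}(W_L^\ast)=\langle v\rangle$ defines the point $x_L=[v^2]\in X$ and a morphism $\pi\colon\mathcal{C}_2^{\circ}\to X$. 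The key lemma is that $L\subset\Tan_{x_L}X$: writing the pencil as $A+tB$ with $\ker A=\langle\xi_0\rangle$, $\ker B=\langle\xi_1\rangle$, the condition $\det(A+tB)\equiv0$ forces $A\xi_1+B\xi_0=0$, hence $A\xi_1\in\mathrm{Im}(A)\cap\mathrm{Im}(B)=\mathrm{Ann}(\langle\xi_0,\xi_1\rangle)=\langle v\rangle$; as $\xi_1(v)=0$ this means $\xi_1$ is isotropic for $A$, and combined with $v\in\mathrm{Im}(A)$ and $\xi_0\ne\xi_1$ it forces $v$ to divide the (rank-two, hence reducible) form $A$, so $A\in v\cdot V$, and likewise $B\in v\cdot V$. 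Conversely every line in a tangent plane $\Tan_xX$ missing $x$ is non-secant. Finally I would globalize: the multiplication $\OO_X(-1)\otimes V\to\Sym^2V\otimes\OO_X$ has image a rank-three subbundle $\Theta$ with fibre $v\cdot V$ at $[v]$, and $\mathcal{C}_2\cong\PP_X(\Theta^{\vee})$ is the $\PP^2$-bundle of lines in the planes $\PP(\Theta_{[v]})$, with the non-secant lines forming the open locus of lines not through the point $x$.

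It remains to identify $\mathcal{C}_1\cap\mathcal{C}_2$ and to check completeness. A tangent line at $x=[v^2]$ whose direction is recorded by some $U\supset\langle v\rangle$ lies both in the conic plane $\PP(\Sym^2U)$ (as the degenerate secant $2x$ of $\Hilb^2X$) and in $\Tan_xX$ (as a line through $x$), so tangent lines sit in both components; conversely a line in $\mathcal{C}_1\cap\mathcal{C}_2$ must meet $X$ (lying in a conic plane) yet lie in a tangent plane meeting $X$ only at one point, forcing it to be tangent. In $\mathcal{C}_2\cong\PP_X(\Theta^\vee)$ these are precisely the lines through the marked point, a $\PP^1$-subbundle over $X$, so $\mathcal{C}_1\cap\mathcal{C}_2$ is exactly the family of tangent lines. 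Together with the trichotomy this gives $F(SX)=\mathcal{C}_1\cup\mathcal{C}_2$ with both components irreducible of dimension four. I expect the main obstacle to be the core lemma of the third paragraph: controlling the degree of the kernel map and proving that a non-secant line is forced into a single tangent plane, together with upgrading the fibrewise $\PP^2$ into a genuine Zariski-locally trivial $\PP^2$-bundle and correctly compactifying the open family of non-secant lines by the tangent lines.
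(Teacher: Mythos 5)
Your proposal proves the right statement, and its core argument is sound, but at the decisive step it takes a genuinely different route from the paper. The two proofs share a skeleton: secant lines give $\mathcal{C}_1\cong\Hilb^2(X)$; non-secant lines are shown to lie in a unique embedded tangent plane; $\mathcal{C}_2$ is the relative Grassmannian of lines in tangent planes (your $\PP_X(\Theta^\vee)$ is the paper's $Gr(\PT X/X)$ in Proposition~\ref{lines-secant-Veronese}); and the tangent lines form $\mathcal{C}_1\cap\mathcal{C}_2$. Even the distinguished point is located the same way: your $x_L$, the annihilator of the span of the kernels, is exactly the paper's $\bigcap_{p\in L}\Sigma_p=\PP(\mathrm{Ann}(U))$ in Lemma~\ref{line-tangent-space}. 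The difference is how one proves $L\subset\PT_{x_L}X$. The paper first shows $\gamma(L)$ is a line by intersection theory on the blow-up of $SX$ along $X$ (the Ein--Shepherd-Barron relation $\EE=2\tau^*H_1-p^*H_2$), and then invokes Zak's theory: the secant loci $Q_p$ are smooth conics, so $Q_p\cap Q_q$ is a single point by Theorem~\ref{intersect-secant-quadric}, and a synthetic argument forces each chord $\langle x,p\rangle$, $p\in L$, to be tangent at $x$. You instead work entirely with pencils of $3\times3$ symmetric matrices: the adjugate along $L$ is a symmetric rank-one matrix of quadratic polynomials, hence has a linear square root, so the kernel map has degree one; plugging the linear parametrization $\xi(t)=\xi_0+t\xi_1$ of the kernels into $(A+tB)\xi(t)=0$ gives $A\xi_1+B\xi_0=0$ --- note this equality holds only for representatives normalized by that parametrization, for arbitrary kernel vectors one only gets proportionality --- and your divisibility argument then puts both $A$ and $B$ in $v\cdot V$. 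I checked this chain; it is correct and self-contained, replacing both the blow-up computation and the secant-locus theory. What the paper's route buys is machinery that survives in higher dimension (it is reused throughout Section~\ref{projection}); what yours buys is an elementary, coordinate-level proof that exhibits $\mathcal{C}_2$ as an explicit projective bundle.

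There is, however, one step whose stated justification is wrong, even though its conclusion is true: the trichotomy. You claim that whenever $L\subset SX$ meets $X$ ``the quadrics cutting out $X$ vanish to order $\ge 2$''. For an honest secant line this is false: it meets $X$ transversally at each of its two intersection points, and the restricted $2\times2$ minors vanish there to order exactly one; the length two is a total, spread over two points. What actually must be ruled out --- and what your decomposition $F(SX)=\mathcal{C}_1\cup\mathcal{C}_2$ silently depends on --- is a line meeting $X$ in a single reduced point (the analogue of the type III lines of Proposition~\ref{5-types-line}, which do exist once $n\geq 4$): such a line would lie in neither component, since a line in a tangent plane meets $X$ either not at all or doubly at the point of tangency. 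This exclusion is not a formal consequence of $X=\mathrm{Sing}(SX)$; it requires the local computation you allude to, carried out correctly. It is short: writing $x=[v^2]$ and $L=\{[v^2+tB]\}$, the condition $L\subset SX$ says precisely that the restriction of the form $B$ to $\mathrm{Ann}(v)\subset V^*$ has rank $\le 1$ and that $\det B=0$. If that restriction is zero, then $B\in v\cdot V$ and $L\subset\PT_x X$ is a tangent line; if it has rank one, then $\det B=0$ forces $B\in\Sym^2 U$ for a plane $U\subset V$ containing $v$, so $L$ lies in a conic plane and meets $X$ in a length-two scheme. With this repair your argument is complete --- and indeed goes beyond the paper, which asserts this trichotomy before Theorem~\ref{intro-thm1} without proof.
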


For higher-dimensional Severi varieties, the geometry of lines in the secant variety is more subtle. For instance, unlike the case of secant cubic fourfold, there exist lines in $SX$ that transversally meet with $X$ at a single point. Through detailed analysis on the geometry of the Severi variety and the secant variety, we divide the lines on the secant variety into five different types, see Proposition \ref{5-types-line}. Based on the classification of lines of different types, we characterize the Fano variety of lines for higher dimensional secant cubics as follows. 
\begin{theorem}[cf. Theorem \ref{main-result}]\label{intro-thm2}
Let $X^n\subset \PP^N, n=\frac{2(N-2)}{3}$ be a Severi variety with $n>2$, and let $SX\subset \PP^N$ be the secant variety. Let $F(SX)$ be the variety of lines on $SX$. For $n>2$ we use $\mathcal{C}^{I},\ldots, \mathcal{C}^{V}$ to denote the strata of lines of type $I,\ldots, V$ respectively. Then we have
\begin{itemize}
	\item If $n>4$, the variety $F(SX)$ is irreducible of the dimension $2N-6$.
	\item If $n=4$, the strata of the variety $F(SX)$ satisfies
	\[
    \overline{\mathcal{C}^I}=\mathcal{C}^{I}\cup \mathcal{C}^{III}\cup \mathcal{C}^{IV}\cup\mathcal{C}^{V}, ~\overline{\mathcal{C}^{II}}=\mathcal{C}^{II}\cup \mathcal{C}^{III}\cup \mathcal{C}^{IV}\cup\mathcal{C}^{V}.
	\] 
	As a result, the variety $F(SX)$ has the dimension $10$. 
\end{itemize}
\end{theorem}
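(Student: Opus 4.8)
The plan is to run a stratum-by-stratum dimension count on the classification of Proposition \ref{5-types-line}, realising each of the five loci $\mathcal{C}^I,\dots,\mathcal{C}^V$ as the total space of an explicit fibration over a natural base built from the Severi variety, and then to read off both $\dim F(SX)$ and its decomposition into irreducible pieces from these fibrations together with their incidence relations. The three ``small'' strata are the most transparent: secant lines $\mathcal{C}^{IV}$ are parametrised by $\Hilb^2(X)$ and so have dimension $2n$; tangent lines $\mathcal{C}^{III}$, being the boundary of the secant family, have dimension at most $2n-1$; and lines contained in $X$ ($\mathcal{C}^V$) are governed by the very constrained geometry of lines on the Severi variety and have still smaller dimension. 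Since $2N-6=3n-2$, the inequality $2n<3n-2$ for $n>2$ shows at once that none of $\mathcal{C}^{III},\mathcal{C}^{IV},\mathcal{C}^V$ can be top-dimensional once $n\geq 4$. This is exactly the numerical reason the picture departs from the Veronese case of Proposition \ref{lines-secant-Veronese}, where $2n=2N-6$ forces the secant locus to be a full component.

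The heart of the computation is the two ``generic'' strata $\mathcal{C}^I$ and $\mathcal{C}^{II}$, the lines meeting $X$ in at most one point. Here I would fix such a contact point $x\in X$ (or, for the lines disjoint from $X$, the ``vertex'' datum of the line, e.g.\ a point of the quadric entry locus $Q^{n/2}$) and analyse the lines of $SX$ through it by means of the tangent cone of the cubic $SX$ along its singular locus $X$: the projectivised tangent cone at $x$ is a quadric in $\PP^{N-1}$, and the lines of $SX$ through $x$ are cut out on it by the residual cubic condition, so the fibre over $x$ has a dimension that is computable and independent of $x$. Running this over the appropriate base and separating the locus where the line meets $X$ transversally from the locus where it does not, I expect $\mathcal{C}^I$ and $\mathcal{C}^{II}$ each to fibre with fibres of the complementary dimension, giving $\dim\mathcal{C}^I=\dim\mathcal{C}^{II}=2N-6$; this also matches the expected count $\dim G(2,N+1)-4=2N-6$ coming from the four coefficients of the restricted cubic. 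Combined with the first paragraph, this already yields $\dim F(SX)=2N-6$.

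Next I would establish the closure relations $\overline{\mathcal{C}^I}\supseteq\mathcal{C}^{III}\cup\mathcal{C}^{IV}\cup\mathcal{C}^{V}$ and likewise for $\mathcal{C}^{II}$. The natural device is to produce, for a line $L$ of type III, IV or V, an explicit one-parameter family of type-I (resp.\ type-II) lines degenerating to $L$: one moves the defining data of a generic line—its contact point together with the residual quadric and cubic data on the tangent cone—so that in the limit the contact point acquires higher multiplicity (a tangent line), or a second contact point appears (a secant), or the whole line falls into $X$. Semicontinuity of the incidence then gives the asserted containments, and since the three small strata have strictly smaller dimension they contribute no new component.

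The genuinely delicate point, and the step I expect to be the main obstacle, is the dichotomy between $n=4$ and $n>4$. For $n=4$ the Severi variety $X=\PP^2\times\PP^2$ carries the transpose involution exchanging the two rulings, and I would show that it interchanges $\mathcal{C}^I$ and $\mathcal{C}^{II}$ while their generic members are separated by a discrete invariant (which of the two families of lines on $X$ the limiting in-$X$ lines belong to, equivalently whether a common left- or right-kernel is carried along the degeneration); hence neither $\overline{\mathcal{C}^I}$ nor $\overline{\mathcal{C}^{II}}$ contains the other, and each is a component of dimension $10$. For $n>4$ the Severi variety is the one attached to the quaternions or octonions and no such splitting is available: here I would instead prove that the open stratum of generic lines is irreducible—either by exhibiting it as a dense orbit of the connected automorphism group of $SX$, or by realising it as an irreducible fibration over an irreducible base—so that its closure absorbs all lower strata and is all of $F(SX)$. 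Verifying this irreducibility for the higher Severi varieties, and matching it cleanly with the stratification of Proposition \ref{5-types-line}, is where the special geometry of $SX$ must be used in full, and is the point requiring the most care.
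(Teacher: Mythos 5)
Your outline has two genuine errors, and it is missing the single structural fact on which the paper's whole proof turns. First, the mechanism you propose for the $n=4$ dichotomy cannot work: the five types of Proposition \ref{5-types-line} are defined purely in terms of the pair $(SX,\, X=\mathrm{Sing}\,SX)$ --- intersection with $X$, secant loci, contact loci, and the projections \eqref{secant-proj} --- so \emph{every} projective automorphism of $\PP^8$ preserving $SX$ preserves each stratum $\mathcal{C}^{I},\ldots,\mathcal{C}^{V}$ individually. In particular the transpose involution of $\PP(\mathrm{Mat}_{3\times 3})$ cannot interchange $\mathcal{C}^{I}$ and $\mathcal{C}^{II}$; what it actually exchanges are the two families of maximal linear subspaces $\{M\,:\,Mv=0\}$ and $\{M\,:\,w^{T}M=0\}$ of $SX$, i.e.\ pieces of $\overline{\mathcal{C}^{II}}$ itself. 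Second, your claim that $\dim\mathcal{C}^{I}=\dim\mathcal{C}^{II}=2N-6$ for all $n$ contradicts the statement being proved: for $n>4$ the variety $F(SX)$ is irreducible of dimension $2N-6$, and two \emph{disjoint} constructible strata cannot both have full dimension in an irreducible variety (each would then contain a dense open subset). So necessarily $\dim\mathcal{C}^{II}<2N-6$ once $n>4$, and any correct argument must produce this asymmetry between the two non-secant strata rather than treat them symmetrically. (A smaller slip, but symptomatic: type III lines are not tangent lines --- they meet $X$ transversally in one reduced point, a phenomenon absent in the Veronese case --- while tangent lines sit inside the secant family $\mathcal{C}^{IV}$, parametrized by non-reduced subschemes in $\Hilb^2(X)$.)

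The missing fact that produces both the asymmetry and the irreducibility is Proposition \ref{lines-in-tangent}: for $n>4$ every line of $SX$, \emph{including those of type II}, lies in an embedded tangent space $\PT_x X$, whereas for $n=4$ exactly the type II lines escape (the proof hinges on $\dim\Lambda'\cap\tilde{\Lambda}=\frac{n}{4}-1$ being positive, which fails only at $n=4$). Granting this, the paper's proof is short and avoids your tangent-cone fibre analysis entirely: the relative Grassmannian $Gr(\PT X/X)$ is irreducible of dimension $n+2(n-1)=2N-6$, and the induced map $\Psi: Gr(\PT X/X)\ra F(SX)$ is surjective and birational for $n>4$ (by uniqueness of $x$ for type I lines), giving irreducibility and the dimension simultaneously; for $n=4$ its closed image is $\mathcal{C}^{I}\cup\mathcal{C}^{III}\cup\mathcal{C}^{IV}\cup\mathcal{C}^{V}$, which is identified with $\overline{\mathcal{C}^{I}}$ because $\mathcal{C}^{I}\cup\mathcal{C}^{II}$, consisting of lines avoiding $\mathrm{Sing}(SX)$, is smooth of dimension exactly $10$ by \cite[Thm. 4.2]{AK77} --- a smoothness argument, not a fibration count --- while $\overline{\mathcal{C}^{II}}$ is swept out by the linear fibres $S(u,\Sigma_p)$ of Lemma \ref{fibre-projection}. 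Your substitutes for this step in the case $n>4$ (a dense orbit of the automorphism group, or an unspecified irreducible fibration) are precisely the hard point left unproved; moreover, fibring the non-secant strata over ``a point of the quadric entry locus'' presupposes what must be shown, namely that a type I line singles out a distinguished point $x\in X$ (the unique $x$ with $L\subset\PT_x X$, via Corollary \ref{position-positive-intersection}). Until some version of Proposition \ref{lines-in-tangent} is established, your dimension counts for $\mathcal{C}^{I}$, $\mathcal{C}^{II}$ and your degeneration argument for the closure relations have no base to fibre over, so the proposal does not yet constitute a proof.
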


Back to our question. Theorem \ref{intro-thm1} and Theorem \ref{intro-thm2} justify the variety of lines on a secant cubic hypersurface is a local complete intersection of the Grassmannian of lines in $\PP^N$. It allows us to extend F.~Charles' conclusion to the case of this particular singular cubic:
\begin{proposition}[cf. Proposition \ref{Torelli-secant-cubic}]
Let $X^n\subset \PP^N, n=\frac{2(N-2)}{3}$ be a Severi variety, and let $SX\subset \PP^N$ be the secant cubic hypersurface. Let $Y$ be a cubic hypersurface of dimension $N-1$. Denote by $F$ and $F'$ the Fano variety of lines on $SX$ and $Y$ respectively. Then $SX$ is projectively equivalent to $Y$ if and only if $F$ is isomorphic to $F'$ with respect to the canonical Pl\"ucker polarizations.
\end{proposition}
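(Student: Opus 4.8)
The plan is to reduce the statement to F.~Charles' reconstruction of a cubic from its polarized Fano variety of lines (Theorem \ref{Charles-geo-Torelli}), the point being that Theorem \ref{intro-thm1} and Theorem \ref{intro-thm2} supply exactly the input his argument needs in the smooth case: that $F=F(SX)$ is a local complete intersection in the Grassmannian $G=G(2,V)$ of lines in $\PP^N=\PP(V)$, cut out by the expected bundle. Write $SX=\{f=0\}$ with $f\in \Sym^3 V^\vee$, let $S$ be the tautological rank-$2$ subbundle on $G$, so that $g=\OO_F(1)=\det S^\vee|_F$ is the Plücker polarization and $F=Z(s_f)$ for the section $s_f\in H^0(G,\Sym^3 S^\vee)$ induced by $f$. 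The easy direction is formal: a linear automorphism of $\PP(V)$ carrying $SX$ to $Y$ induces an automorphism of $G$ preserving $\OO_G(1)$, hence a polarized isomorphism $(F,g)\cong(F',g')$. The content is the converse, for which I would show that the assignment $(F,g)\mapsto SX$ is intrinsic to the polarized variety, so that a polarized isomorphism transports it.

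For the reconstruction I would proceed in the following order. First, using that $g$ is very ample I recover the morphism $\phi_{|g|}\colon F\to \PP(H^0(F,g)^\vee)$; the cohomological heart of the matter is to check that the restriction $\wedge^2 V^\vee=H^0(G,\OO_G(1))\to H^0(F,g)$ is an isomorphism, which identifies the target with the Plücker space $\PP(\wedge^2 V)$ and reproduces the original Plücker embedding of $F$. Second, I recover the Grassmannian $G\subset \PP(\wedge^2 V)$ from $F$ together with the tautological bundle $S$ — equivalently the universal $\PP^1$-bundle $\PP(S)=\mathcal{L}\to F$ — so that the restriction $V^\vee=H^0(G,S^\vee)\to H^0(F,S^\vee)$ is an isomorphism and the evaluation $\mathcal{L}=\PP(S)\to \PP(V)=\PP^N$ is reconstructed. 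Since $\dim V=N+1\geq 6$ for every Severi variety, $\mathrm{Aut}(G)=PGL(V)$ (there is no Plücker duality), so this pins down $\PP^N$ up to $PGL_{N+1}$. Third, the cubic is recovered as the image of $\mathcal{L}$, i.e. the union of the lines parametrized by $F$; equivalently, using that $F$ is a local complete intersection cut out by $\Sym^3 S^\vee$ and that the section of $\Sym^3 S^\vee$ vanishing on $F$ is unique up to scalar, $SX$ is the unique cubic hypersurface containing all lines of $F$. Applying this recipe to both $(F,g)$ and $(F',g')$ — each of which returns the cubic one started from — and feeding in the given polarized isomorphism then yields an element of $PGL_{N+1}$ carrying $SX$ to $Y$, the desired projective equivalence.

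The main obstacle I anticipate is the package of cohomological vanishing statements underlying the two reconstruction steps — the isomorphisms $\wedge^2 V^\vee\cong H^0(F,g)$ and $V^\vee\cong H^0(F,S^\vee)$, and the uniqueness up to scalar of the defining section of $\Sym^3 S^\vee$ — in the presence of the singularities of $F$. For $n>4$, where $F$ is irreducible of the expected dimension $2N-6$, these should follow from the lci property together with the Koszul resolution of $\OO_F$ by $\wedge^\bullet \Sym^3 S$, exactly as in Charles' smooth argument. The delicate cases are $n=2$ and $n=4$: for the Veronese, $F$ is reducible with the two components $\mathcal{C}_1\cong \Hilb^2(X)$ and the $\PP^2$-bundle $\mathcal{C}_2$ of Theorem \ref{intro-thm1} meeting along the tangent lines, and for the Segre fourfold $F$ carries the nontrivial stratification of Theorem \ref{intro-thm2}. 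There I would compute the restriction maps and establish the uniqueness of the cubic component by component, using the explicit descriptions of the strata and their incidences to control the relevant $H^0$ and to confirm that the reconstructed Plücker space, Grassmannian, and tautological bundle are still determined intrinsically. Once these vanishings are secured, transporting the intrinsic data across the polarized isomorphism is formal.
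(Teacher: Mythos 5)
Your overall strategy is the same as the paper's: both reduce to Charles' reconstruction argument, with the expected-dimension/lci statement (Theorems \ref{intro-thm1} and \ref{intro-thm2}) as the new input, and your first step (linear normality of $F$ in the Pl\"ucker space, which the paper uses implicitly), your third step (the cubic is the union of its lines), and the easy direction all match the paper's proof. But there is a genuine gap at your second, central step: you propose to recover the Grassmannian ``from $F$ together with the tautological bundle $S$'' and to rebuild $\PP(V)$ from the evaluation map of $\PP(S)|_F$. The bundle $S|_F$ --- equivalently the universal family of lines over $F$ --- is not part of the given data: the hypothesis is only an abstract isomorphism $g$ respecting the polarization $\det S^\vee|_F$, and nothing says a priori that $g$ carries the interpretation of points of $F$ as lines in $\PP^N$ to the corresponding interpretation on $F'$; that is precisely what must be proved. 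Put differently, after your first step $F$ sits in an abstract projective space $\PP(\wedge^2 V)$, and there are many identifications of that space with a Pl\"ucker space, differing by automorphisms that do not preserve the Grassmannian cone, so $G$ (and hence $S|_F$) has to be shown to be determined by $F$ alone. The paper's mechanism for this --- the one vanishing missing from your list --- is the isomorphism $H^0(G,\OO_G(2))\overset{\sim}{\ra} H^0(F,\OO_F(2))$, obtained from the Koszul complex of the regular section of $\Sym^3 S^{\vee}$ twisted by $\OO(2)$ together with the vanishing theorem of Altman--Kleiman. It shows that the quadrics vanishing on $F$ are exactly the quadrics cutting out $G$, so $G$ is the intersection of all quadrics through $F$ and is therefore intrinsic; the induced automorphism $g'$ of $\PP(\wedge^2 V)$ then preserves $G$, and $\mathrm{Aut}(G)\cong PGL(V)$ (no duality, as $\dim V=N+1>4$, a point you correctly note) produces the automorphism of $\PP^N$. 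With this step inserted, the rest of your outline goes through.

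A consequence of this repair is that your anticipated ``delicate cases'' $n=2$ and $n=4$ are not delicate, and the component-by-component computations you plan are unnecessary. In the paper's argument every cohomology group is computed on $G$, via the Koszul resolution and \cite[Thm. 5.1]{AK77}; the only input from $F$ is that it has the expected dimension, so that the defining section of $\Sym^3 S^{\vee}$ is regular and the Koszul complex is exact. The reducibility of $F$ for the Veronese surface and the stratification for the Segre fourfold are irrelevant to these vanishings. This is also the practical reason to prefer recovering $G$ via quadrics over your proposed route of computing $H^0(F,g)$ and $H^0(F,S^\vee|_F)$ directly on a reducible or stratified $F$, which would be substantially harder and is never needed.
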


\textbf{Acknowledgements.} The author appreciates Baohua Fu for introducing the research topics and many discussions. The author is grateful to Fyodor Zak for communications on the first draft. Many thanks to Cong Ding, Jie Liu and Ruiran Sun for some discussions.

\section{Preliminary}
Let $X$ be an irreducible nondegenerate variety in $\PP^N$, and let $Y\subset \PP^N$ be an irreducible subvariety. The join variety $S(X, Y)$ is the closure of the union of chords $\langle x, y\rangle$ for all $x\in X, y\in Y$. If $Y\subset X$, we call $S(X, Y)$ the \emph{secant variety} of $X$ and $Y$. Suppose $Y\subset X$ and $X$ is non-singular. The \emph{tangent variety} $T(X, Y)$ is the union of embedded tangent spaces $\displaystyle{\bigcup_{y\in Y}\PT_y X}$. Throughout the paper, the notation $\PT_x X$ indicates the linear subspace in $\PP^N$ that is tangent to a given subvariety $X$ at a smooth point $x\in X$. If $Y=X$, we simply denote by $SX$ (resp. $TX$) the secant (resp. tangent) variety of $X$.

Let $p\in SX\setminus X$ be a point. The \emph{secant locus} of the point $p$, denoted by $Q_p$, is the closure of the set
\begin{equation}\label{secant-loucs}
Q_p:=\{x\in X ~|~ x \text{~lies on a line secant to~} X \text{~passing through~} p \}.
\end{equation}
The \emph{contact locus} $\Sigma_p$ of a general $p\in SX\setminus X$ is defined to be 
\begin{equation}\label{contact-locus}
\Sigma_p:=\overline{\{u\in SX ~|~ \PT_u SX=\PT_p SX\}}.
\end{equation}
In other words, the tangent hyperplane $\PT_p SX$ is tangent to $SX$ along the points in $\Sigma_p$. 

Let $X^n\subset \PP^N$ be a Severi variety, i.e. nonsingular nondegenerate of dimension $n$ such that $n=\frac{2}{3}(N-2)$ and $SX\neq \PP^N$. The above two notions are fundamental to study the Severi varieties. Let us recall the following:
\begin{itemize}
\item the contact locus $\Sigma_p\subset\PP^N$ is a linear subspace of dimension $\frac{n}{2}+1$;
\item the secant locus $Q_p$ is an $\frac{n}{2}$-dimensional nonsingular quadric in $\Sigma_p$, and $Q_p=\Sigma_p\cap X$.
\end{itemize}
For reference see \cite[Thm. 2.1]{LV} or \cite[\S IV, Prop. 2.1]{Zak93}.

\begin{lemma}\label{quadric-tangent-locus} 
Let $X^n\subset \PP^N$ be a Severi variety of dimension $n$. Let $p\in SX\setminus X$, and $Q_p$ the secant locus of $p$. For any point $x\in Q_p$ there is
\[
\PT_x Q_p= \PT_x X\cap \Sigma_p.
\]
\end{lemma}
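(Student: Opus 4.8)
The plan is to establish the nontrivial inclusion $\PT_x X\cap \Sigma_p\subseteq \PT_x Q_p$, since the reverse inclusion is immediate: because $Q_p\subseteq X$ and $Q_p\subseteq \Sigma_p$ with $x$ a smooth point of both $Q_p$ and $X$, we have $\PT_x Q_p\subseteq \PT_x X$ and $\PT_x Q_p\subseteq \PT_x\Sigma_p=\Sigma_p$ (the latter because $\Sigma_p$ is a linear subspace), hence $\PT_x Q_p\subseteq \PT_x X\cap \Sigma_p$.

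First I would reduce to a dimension dichotomy. Recall that $Q_p$ is a nonsingular quadric of dimension $\tfrac{n}{2}$ inside $\Sigma_p\cong\PP^{\frac{n}{2}+1}$, so $Q_p$ is a smooth quadric hypersurface of $\Sigma_p$ and its embedded tangent space $\PT_x Q_p$ is a hyperplane of $\Sigma_p$. Since the linear space $\PT_x X\cap \Sigma_p$ contains this hyperplane, it equals either $\PT_x Q_p$ or all of $\Sigma_p$, where the second case occurs precisely when $\Sigma_p\subseteq \PT_x X$. Thus the lemma is equivalent to ruling out $\Sigma_p\subseteq \PT_x X$, together with identifying $\PT_x X\cap\Sigma_p$ with the correct hyperplane.

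The key input is that a Severi variety $X$ is cut out by quadrics, i.e. its homogeneous ideal is $I_X=(q_1,\dots,q_r)$ for quadratic forms $q_i$; this holds uniformly for all four cases (Veronese, Segre, Grassmannian, and $E_6$), which are classically intersections of quadrics. At the smooth point $x$ the embedded tangent space is then $\PT_x X=V(dq_1|_x,\dots,dq_r|_x)$. I would restrict these quadrics to $\Sigma_p$. Each $q_i$ vanishes on $Q_p\subseteq X$, so $q_i|_{\Sigma_p}$ is a quadratic form on $\Sigma_p$ vanishing on $Q_p$; since $Q_p$ is a smooth quadric hypersurface, the degree-two part of its homogeneous ideal in $\Sigma_p$ is one-dimensional, spanned by the defining form $q$ of $Q_p$. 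Hence $q_i|_{\Sigma_p}=\lambda_i q$ for scalars $\lambda_i$, and not all $\lambda_i$ vanish, for otherwise $\Sigma_p\subseteq V(q_1,\dots,q_r)=X$, contradicting $Q_p=\Sigma_p\cap X\subsetneq \Sigma_p$.

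Finally I would compute on tangent vectors. For a vector $v\in\Sigma_p$ tangent at $x$, the differential $dq_i|_x(v)$ depends only on $q_i|_{\Sigma_p}$, so $dq_i|_x(v)=\lambda_i\, dq|_x(v)$. Choosing an index $i$ with $\lambda_i\neq 0$ yields $\PT_x X\cap\Sigma_p\subseteq\{v\in\Sigma_p:dq|_x(v)=0\}=\PT_x Q_p$, where $dq|_x$ is nonzero on $\Sigma_p$ because $x$ is a smooth point of $Q_p$. This gives the desired inclusion and hence equality. The main obstacle is the structural input that $X$ is cut out by quadrics together with the bookkeeping that the restricted quadrics collapse to a single form on $\Sigma_p$; once these are in place the tangent-vector computation is routine.
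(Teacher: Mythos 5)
Your proof is correct, but it takes a genuinely different route from the paper's. The paper argues by contradiction: since $\PT_x Q_p$ is a hyperplane of $\Sigma_p$, failure of the equality would force $\Sigma_p\subseteq \PT_x X$, hence $Q_p=\Sigma_p\cap X\subseteq \PT_x X\cap X$; it then invokes the fact that $\PT_x X\cap X$ is a cone with vertex $x$, so that the lines joining $x$ to points of $Q_p$ lie in $X\cap\Sigma_p=Q_p$, making $Q_p$ a quadric cone and contradicting its nonsingularity. You instead compute with defining quadrics of $X$: restricting them to $\Sigma_p$, noting that they collapse into the one-dimensional space spanned by the equation $q$ of the smooth quadric $Q_p$, and comparing differentials at $x$. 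The two arguments rest on the same structural input in different guises: the cone property the paper quotes is precisely the classical consequence of $X$ being an intersection of quadrics (a line joining $x$ to a point of $X\cap\PT_x X$ is tangent at $x$ to every quadric through $X$ and meets it in a further point, hence lies in it). What your version buys is a direct proof of the equality with no case analysis, and in fact it needs less than you claim: the equality $\PT_x X=V(dq_1|_x,\dots,dq_r|_x)$ requires the $q_i$ to generate $I_X$ locally at $x$, but your argument only uses the inclusion $\PT_x X\subseteq\bigcap_i \Ker(dq_i|_x)$, which holds for any quadrics vanishing on $X$; so set-theoretic quadric generation suffices. What the paper's version buys is brevity and a coordinate-free argument, at the cost of quoting the cone property from Zak's theory. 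The one thing you should make explicit is a reference for the quadratic generation of Severi varieties (classical for the Veronese, Segre and Pl\"ucker embeddings, and true for $E_6\subset\PP^{26}$ as well), since your entire argument is conditioned on that fact, which the paper nowhere states in this form.
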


\begin{proof}
The embedded tangent space $\PT_x Q_p$ is a hyperplane of $\Sigma_p$. Assume $\PT_x Q_p$ is a proper subspace of $\PT_x X\cap \Sigma_p$. It deduces $\Sigma_p\subseteq \PT_x X$. By $\Sigma_p\cap X=Q_p$ we have $Q_p\subset \PT_x X\cap X$. Recall $\PT_x X\cap X$ is a cone with the vertex $x$. It implies $Q_p$ is a quadric cone, which leads to a contradiction since $Q_p$ is nonsingular.
\end{proof}


Let $X\subset \PP^N$ be a nondegenerate nonsingular variety of dimension $n$. Let $L:=\PT_x X$ be the embedded tangent space of $X$ at $x$. Consider the contact locus 
\[
Z_L:=\{y\in X ~|~ \PT_y X=L\}.
\]
Zak's theorem on tangency \cite[\S I, Thm. 1.7]{Zak93} asserts the closed subset $Z_L\subset X$ is of dimension zero. In particular, for a Severi variety $X$, we can prove that $Z_L$ consists of a single point.

\begin{lemma}\label{unique-tangency}
Let $X^n\subset \PP^N, n=\frac{2}{3}(N-2)$ be a Severi variety. Let $x, y$ be distinct points in $X$. Then $\PT_y X\neq \PT_x X$. 
\end{lemma}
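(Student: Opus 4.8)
The plan is to argue by contradiction: assuming there are distinct points $x,y\in X$ with $\PT_xX=\PT_yX=:L$, I would produce a positive-dimensional subset of the tangency locus $Z_L=\{z\in X\mid \PT_zX=L\}$, which is impossible by Zak's theorem on tangency ($\dim Z_L=0$).

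First I would observe that the secant line $\ell:=\langle x,y\rangle$ already lies in $X$. Indeed $y\in X$ and $y\in\PT_yX=L=\PT_xX$, so $y\in\PT_xX\cap X$; since $\PT_xX\cap X$ is a cone with vertex $x$ (the fact already invoked in the proof of Lemma \ref{quadric-tangent-locus}), the whole line $\ell$ is contained in $\PT_xX\cap X\subseteq X$. This is precisely why a direct appeal to a secant locus $Q_p$ with $p\in\ell$ is unavailable here: $\ell$ is not a genuine chord, and in fact one checks that no $p\in SX\setminus X$ has both $x,y\in Q_p$, so the quadric $Q_p$ of the previous lemma cannot be used to separate $x$ and $y$.

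Next I would exploit the symmetry of the hypothesis. Writing $C:=\PT_xX\cap X=\PT_yX\cap X$, the same cone statement applied at $y$ shows that $C$ is simultaneously a cone with vertex $x$ and a cone with vertex $y$; hence it is invariant along $\ell$, i.e. a cone whose vertex contains the line $\ell$. In particular every $z\in\ell$ is a vertex of $C$, so every line $\langle z,c\rangle$ with $c\in C$ lies in $C\subseteq X$; since $z$ is a smooth point of $X$, this forces $C\subseteq\PT_zX$. Using that $C=\PT_xX\cap X$ spans $L=\PT_xX$, I would deduce $L\subseteq\PT_zX$, and hence $\PT_zX=L$ by equality of dimensions. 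Therefore $\ell\subseteq Z_L$, contradicting $\dim Z_L=0$.

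The main obstacle is the nondegeneracy input that $C=\PT_xX\cap X$ spans the tangent space $\PT_xX$, which is what upgrades the inclusion $C\subseteq\PT_zX$ to the equality $\PT_zX=L$. For the Severi varieties this should follow from the explicit description of the tangential section (for the Segre fourfold, for instance, $C$ is the union of the two rulings through the point and manifestly spans the tangent $\PP^4$), but a uniform justification requires controlling $\PT_xX\cap X$ via the family of secant-locus quadrics passing through $x$. If the spanning statement is awkward to establish directly, an alternative route avoids it: since $\PT_zX\cap X$ is again a cone with vertex $z$ for every $z\in\ell$ and contains the fixed cone $C$, one can compare these tangential sections along $\ell$ and once more contradict the finiteness of $Z_L$ coming from Zak's theorem on tangency.
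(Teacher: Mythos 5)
Your reduction to Zak's theorem on tangency has a genuine gap exactly where you flag it: the spanning claim. Steps 1--3 are sound (given the fact, also used by the paper, that $\PT_x X\cap X$ is a cone with vertex $x$): under the hypothesis $\PT_xX=\PT_yX=:L$ the line $\ell=\langle x,y\rangle$ lies in $X$, the vertex set of the cone $C=L\cap X$ is a linear space containing $\ell$, and hence $C\subseteq \PT_zX$ for every $z\in\ell$. But without knowing that $C$ spans $L$ you cannot upgrade this to $\PT_zX=L$, so you never exhibit a positive-dimensional subset of $Z_L$, and the proof does not close. The spanning statement is not a harmless technicality: it is \emph{false} for the Veronese surface, where $\PT_xX\cap X=\{x\}$, so no uniform justification can exist, and the $n=2$ case of the lemma would have to be handled separately (for instance by observing that $\ell\subseteq X$ is already absurd because the Veronese surface contains no lines --- a step you do not take). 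For $n\geq 4$ the spanning does hold (e.g.\ the two rulings through a point of the Segre fourfold span the tangent $\PP^4$), but you give only a plan, not a proof; and your fallback (``compare the tangential sections along $\ell$'') does not fill the gap either: knowing $C\subseteq \PT_zX$ for all $z\in\ell$ does not pin down $\PT_zX$, since distinct points of $\ell$ could a priori have distinct tangent spaces all containing $C$, and then no contradiction with $\dim Z_L=0$ arises.

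Moreover, the remark with which you justify abandoning the secant-locus approach is incorrect, and that approach is exactly the paper's (much shorter) proof. For a general point $p\in \PT_xX\setminus X$ --- off $X$, in particular not on $\ell$ --- the line $\langle x,p\rangle$ is tangent to $X$ at $x$, so $x$ lies in the secant locus $Q_p$ (recall $Q_p$ is defined as a closure and equals $\Sigma_p\cap X$); under the hypothesis $\PT_xX=\PT_yX$ the same applies to $y$, so both $x,y\in Q_p$. Lemma \ref{quadric-tangent-locus} then gives $\PT_xQ_p=\PT_xX\cap\Sigma_p=\PT_yX\cap\Sigma_p=\PT_yQ_p$, and the injectivity of the Gauss map of the nonsingular quadric $Q_p$ forces $x=y$, a contradiction. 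Your claim that ``no $p\in SX\setminus X$ has both $x,y\in Q_p$'' confuses the (true) observation that one cannot choose $p$ on $\ell$ itself with the (false) assertion that no admissible $p$ exists at all. So the paper's argument goes through with the tools already established, whereas yours hinges on an unproved --- and for $n=2$ false --- spanning lemma.
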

\begin{proof}
Suppose there exists $y\neq x \in X$ such that $\PT_y X=\PT_x X$. For a general point $p$ in $\PT_x X-X$ we have $x, y\in Q_p$. By Lemma \ref{quadric-tangent-locus} there is $\PT_x Q_p=\PT_y Q_p$. Notice that the Gauss map 
\begin{align*}
\gamma: Q_p&\overset{\sim}{\rightarrow} Q_p^*\\
x&\ra \PT_x Q_p \nonumber
\end{align*}
for the nonsingular quadric $Q_p$ is an isomorphism. Hence $x=y$.
\end{proof}

Let $X^n\subset \PP^N$ be a Severi variety, and let $p, q\in SX\setminus X$ be two points. The definition of the contact locus \eqref{contact-locus} shows that
\[
\Sigma_p=\Sigma_q \text{~if and only if~} \PT_p SX=\PT_q SX.
\] 
In this situation, we have $Q_p=Q_q$. When $Q_p\neq Q_q$, the intersection of $Q_p$ and $Q_q$ is special. The following result frequently occurs in the rest of the paper.

\begin{theorem}\cite[\S IV, Cor. 3.5]{Zak93}\label{intersect-secant-quadric}
Let $X^n\subset \PP^N$ be a Severi variety, and let $p, q\in SX\setminus X$ be two points. Suppose that $Q_p\neq Q_q$. Then the intersection $Q_p\cap Q_q$ is either
\begin{enumerate}
	\item a single point, for general $p, q\in SX\setminus X$, 
	\item or a linear subspace of dimension $\frac{n}{4}$.
\end{enumerate}
\end{theorem}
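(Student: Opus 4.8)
The plan is to reduce the intersection of the two secant--locus quadrics to a common \emph{linear} section of a single smooth quadric, and then to show that such a section is forced to be either a point or a linear space. First I would use the recalled identity $Q_p=\Sigma_p\cap X$ (and the analogue for $q$) to write
\[
Q_p\cap Q_q=(\Sigma_p\cap X)\cap(\Sigma_q\cap X)=(\Sigma_p\cap\Sigma_q)\cap X.
\]
Setting $\Lambda:=\Sigma_p\cap\Sigma_q$ for the (linear) intersection of the two contact loci and using $\Lambda\subseteq\Sigma_p$, this becomes $Q_p\cap Q_q=\Lambda\cap Q_p=\Lambda\cap Q_q$. Thus the object to understand is one linear space $\Lambda$ cut against the smooth quadric $Q_p$, subject to the constraint that it meets the two \emph{distinct} quadrics $Q_p\neq Q_q$ in the same locus.

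The key step is to show that no point of $\Lambda$ can lie off $X$. I would first establish that for every $r\in\Sigma_p\setminus X$ one has $Q_r=Q_p$: since $Q_p$ is a quadric \emph{hypersurface} of the linear space $\Sigma_p$ (it has dimension $n/2$ in $\Sigma_p$ of dimension $n/2+1$), a point $r\in\Sigma_p\setminus Q_p=\Sigma_p\setminus X$ is joined to a general $x\in Q_p$ by a line in $\Sigma_p$ meeting $Q_p$ in a second point, exhibiting $x$ on a secant of $X$ through $r$; passing to closures gives $Q_p\subseteq Q_r$, and as both are irreducible quadrics of dimension $n/2$ they coincide. (Here $r\in\Sigma_p\subseteq SX$ by the definition \eqref{contact-locus}, and $r\notin X$, so $Q_r$ is defined.) Now if $\Lambda$ contained a point $r\notin X$, then $r\in\Sigma_p\setminus X$ and $r\in\Sigma_q\setminus X$ would give $Q_p=Q_r=Q_q$, contradicting $Q_p\neq Q_q$. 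Hence either $\dim\Lambda\leq 0$, in which case $Q_p\cap Q_q=\Lambda\cap X$ is at most a single point (and if that point exists it lies on $X$), or $\dim\Lambda\geq 1$ and $\Lambda\subseteq X$. In the latter case $\Lambda\subseteq\Sigma_p\cap X=Q_p$, so $\Lambda$ is a linear subspace of the smooth $(n/2)$-dimensional quadric $Q_p$; therefore $\dim\Lambda\leq\lfloor n/4\rfloor=n/4$, and $Q_p\cap Q_q=\Lambda$ is a linear space, realized as a (sub)ruling of $Q_p$. This already reproduces the shape of the dichotomy, and it explains why case (2) cannot occur when $n=2$: there $n/4\notin\ZZ$, consistently with the fact that coincidence of the relevant data forces $Q_p=Q_q$.

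What remains — and what I expect to be the main obstacle — are the two quantitative inputs that this soft argument does not supply: that for general $p,q$ the intersection $\Sigma_p\cap\Sigma_q$ is nonempty and zero-dimensional (so that the single point is genuinely attained rather than the intersection being empty), and that in the special case the ruling $\Lambda$ has dimension exactly $n/4$, with no intermediate dimension arising. Neither follows from a dimension count: the expected dimension $2(n/2+1)-N=-n/2$ of $\Sigma_p\cap\Sigma_q$ is negative for $n>0$, so two linear spaces of dimension $n/2+1$ in $\PP^N$ are not expected to meet at all, and the very fact that $\Sigma_p\cap\Sigma_q\neq\emptyset$ is a special feature of the Severi configuration. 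To settle these I would pass to the uniform description of the four Severi varieties as the rank-one loci in the spaces of $3\times 3$ Hermitian matrices over the complexified composition algebras of dimension $n/2\in\{1,2,4,8\}$, where $\Sigma_p$ is the linear space of matrices supported on the rank-$\leq 2$ datum (image, and coimage) determined by $p$, and $Q_p$ is its rank-one locus. The mutual position of $\Sigma_p$ and $\Sigma_q$ is then governed by how the two rank-two images (and coimages) of $p$ and $q$ meet inside the rank-three module: they always overlap, generically in the smallest possible piece, cutting out a single point, and in the degenerate situation in a full $n/4$-dimensional ruling. Carrying this out type by type across $n=2,4,8,16$ — the octonionic case $n=16$ being the subtlest — or equivalently extracting the incidence of the spans of secant loci from Zak's analysis in \cite{Zak93}, is the technical heart of the statement.
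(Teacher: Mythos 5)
The paper does not prove this statement at all: it is imported verbatim from Zak's monograph (\cite[\S IV, Cor.~3.5]{Zak93}), so there is no internal proof to compare against, and your attempt has to stand on its own. Its first half does stand: the observation that $Q_r=Q_p$ for every $r\in\Sigma_p\setminus X$ (via secant lines inside $\Sigma_p$ through $r$, plus irreducibility and equality of dimensions of the two smooth quadrics) correctly forces $\Sigma_p\cap\Sigma_q\subseteq X$ whenever $Q_p\neq Q_q$, hence $Q_p\cap Q_q=\Sigma_p\cap\Sigma_q$ is a \emph{linear} subspace of the smooth $(n/2)$-dimensional quadric $Q_p$, and therefore has dimension at most $n/4$. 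This is a clean and valid reduction.

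However, what you have actually proven is strictly weaker than the theorem: ``$Q_p\cap Q_q$ is empty, or a linear space of dimension $\leq n/4$.'' The content of Zak's corollary is precisely the three facts your argument does not reach: the intersection is never empty, it is a \emph{single point} for general $p,q$, and in the degenerate case its dimension is \emph{exactly} $n/4$ with no intermediate dimensions occurring. You identify these gaps yourself, but the route you propose to close them --- a type-by-type computation in $3\times 3$ Hermitian matrices over the four composition algebras --- is only announced, not carried out, and the alternative of ``extracting the incidence of the spans of secant loci from Zak's analysis'' is circular, since the statement being proven \emph{is} that analysis. Your own dimension count (expected dimension $-n/2<0$ for $\Sigma_p\cap\Sigma_q$) shows nonemptiness is a genuinely special feature that cannot be waved through. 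For the generic case there is a non-circular fix: the paper itself later invokes \cite[Cor.~2.8]{LV}, which says $Q_p\cap Q_q$ consists of a single point whenever $q\notin \PT_p SX$, and this could be cited to settle case (1). But the assertion that positive-dimensional intersections have dimension exactly $n/4$ remains unproven in your write-up, so as it stands the proposal is an incomplete proof with a correct first half.
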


The following lemma presents more refined information on the intersection of secant loci, which is useful in Section $3$. The set-up is as follows.

Let $X$ be a Severi variety, and let $x\in X$ be an arbitrary point. We set 
\begin{equation}\label{family-secant-loci}
Q^x:=\{[\PT_z SX]\in {\PP^N}^* ~|~ z\in S(x,X)\setminus X\}.
\end{equation}
The variety $Q^x\subset {\PP^N}^*$ is the image of $S(x, X)$ under the $(N-1)$-Gauss map on $SX$. Moreover, it is a nonsingular $\frac{n}{2}$-dimensional quadric in the dual projective space $(\PT_x X)^*$, see \cite[\S IV, Prop. 3.1]{Zak93}. Under the correspondence $[\PT_z SX]\leadsto Q_z$, the variety $Q^x$ can be regarded as the family of $\frac{n}{2}$-dimensional quadrics in $X$ passing through the point $x$. The following statement characterizes the intersection of two secant loci passing through the point $x$.
\begin{lemma}\cite[\S IV, Prop. 3.3]{Zak93}\label{family-secant-loci-cor}
Fix $p\in S(x, X)\setminus X$ and the corresponding point $[\PT_p SX]\in Q^x$. For a point $q\in S(x, X)\setminus X$ and the corresponding $[\PT_q SX]\in Q^x$, the dimension of the linear subspace $Q_p\cap Q_q$ is positive if and only if $[\PT_q SX]$ lies in the tangent cone of $Q^x$ with the vertex $[\PT_p SX]$.
\end{lemma}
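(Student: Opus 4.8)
The plan is to leverage the dichotomy of Theorem \ref{intersect-secant-quadric} together with the fact that every secant locus in the family $Q^x$ already contains the point $x$. First I would record that, since $p, q \in S(x, X)\setminus X$, the point $x$ lies on a secant line through $p$ (resp. through $q$), whence $x \in Q_p \cap Q_q$; in particular the intersection is never empty. By Theorem \ref{intersect-secant-quadric} it is therefore either the single point $\{x\}$ or a linear subspace $\Lambda$ of dimension $\frac n4$ passing through $x$. Thus the statement reduces to matching the positive-dimensional alternative with the condition that $[\PT_q SX]$ lie in the tangent cone of $Q^x$ with vertex $[\PT_p SX]$.

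Next I would reinterpret both conditions intrinsically. On the quadric side, since $Q^x$ is a smooth quadric, a point $w=[\PT_q SX]$ lies in the tangent cone with vertex $v=[\PT_p SX]$ if and only if $v$ and $w$ are conjugate with respect to $Q^x$, equivalently if and only if the whole line $\overline{vw}$ is contained in $Q^x$. Such a line is a pencil of tangent hyperplanes, i.e. a one-parameter family of secant loci $\{Q_{z_t}\}$ all belonging to $Q^x$ and hence all passing through $x$. On the intersection side, a positive-dimensional $Q_p\cap Q_q=\Lambda$ has dimension $\frac n4$, which is exactly the dimension of a maximal linear subspace of the $\frac n2$-dimensional smooth quadric $Q_p$; so $\Lambda$ is a maximal ruling of $Q_p$ (and of $Q_q$) through $x$. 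The key geometric assertion I would then establish is the correspondence: the secant loci $Q_z\in Q^x$ containing a fixed maximal ruling $\Lambda\ni x$ are parametrized precisely by a line of $Q^x$ through $v$, and conversely the members of a line $\overline{vw}\subseteq Q^x$ share a common maximal ruling through $x$.

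Given this correspondence both implications follow. If $\dim(Q_p\cap Q_q)>0$, the common ruling $\Lambda$ produces a line of $Q^x$ through $v$ and $w$, so $w$ is conjugate to $v$ and lies in the tangent cone. Conversely, if $w$ lies in the tangent cone, the line $\overline{vw}\subseteq Q^x$ supplies a pencil of secant loci with a common maximal ruling $\Lambda$, forcing $Q_p\cap Q_q\supseteq\Lambda$ to be positive-dimensional. To make the ``only if'' direction airtight one checks that when $\overline{vw}\not\subseteq Q^x$ the line meets $Q^x$ only at $v,w$, and then Theorem \ref{intersect-secant-quadric} leaves no option but $Q_p\cap Q_q=\{x\}$.

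The main obstacle is the correspondence in the second paragraph: translating between the extrinsic projective geometry of the single quadric $Q^x$ (its lines and tangent cones inside the dual space $(\PT_x X)^*$) and the ruling structure of the individual secant loci $Q_z\subset X$. I expect to extract this from the description of $Q^x$ as the Gauss image of $S(x,X)$, see \cite[\S IV, Prop.~3.1]{Zak93} and \eqref{family-secant-loci}, together with Lemma \ref{quadric-tangent-locus}, identifying a tangent direction to $Q^x$ at $v$ with a first-order deformation of $\Sigma_p$ that fixes a maximal ruling of $Q_p$. Verifying that this identification is exact, and uniform across the four Severi varieties rather than treated case by case (the concrete model $X=\PP^2\times\PP^2$, where $Q^x\cong\PP^1\times\PP^1$ and the two rulings recover the two families of common linear sections, being a useful guide), is the delicate point of the argument.
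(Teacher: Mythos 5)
First, a framing note: the paper offers no proof of this lemma at all---it is imported wholesale from Zak \cite[\S IV, Prop.~3.3]{Zak93}---so your attempt can only be judged on its own merits, not against an argument in the text. The outer layer of your plan is sound: $x\in Q_p\cap Q_q$, so by Theorem~\ref{intersect-secant-quadric} the intersection is either $\{x\}$ or an $\frac{n}{4}$-dimensional linear space through $x$ (the degenerate case $Q_p=Q_q$ should be noted separately), and for distinct points $v,w$ of the smooth quadric $Q^x$, membership of $w$ in the tangent cone with vertex $v$ is indeed equivalent to $\overline{vw}\subset Q^x$. But everything substantive is concentrated in the ``key geometric assertion'' of your second paragraph, which you explicitly leave unproved (``I would then establish\dots'', ``the delicate point''). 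That assertion, up to the routine reductions, \emph{is} the lemma; what you have is a plan, not a proof.

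Moreover, the deferred assertion is false as stated already for $n=8$. Take $X=G(1,5)\subset\PP^{14}$. Writing $x=[V_x]$ with $V_x\subset \CC^6$ two-dimensional, one has $Q_p=G(2,W_1)$ for a four-dimensional $W_1\supset V_x$, and $Q^x$ identifies with $G(2,\CC^6/V_x)$, with $[\PT_q SX]\leftrightarrow W_2/V_x$. The maximal rulings of $Q_p$ through $x$ come in two families: $\alpha$-planes $\{V: L\subset V\subset W_1\}$ with $L\subset V_x$ a line, and $\beta$-planes $G(2,H)$ with $V_x\subset H\subset W_1$ a hyperplane of $W_1$. The secant loci in $Q^x$ containing a fixed $\alpha$-plane consist of $Q_p$ alone (the planes of that family sweep out $W_1$, forcing $W_1\subset W_2$), while those containing a fixed $\beta$-plane $G(2,H)$ are all $G(2,W_2)$ with $H\subset W_2$, which form a $\PP^2$ inside $Q^x$---a maximal ruling of $Q^x$, not a line. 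So in neither family is the parametrizing set ``precisely a line of $Q^x$ through $v$''; your claim is the $n=4$ specialization (where $\PP^{n/4}=\PP^1$), and the Segre model you invoke as a guide is exactly what misleads here. The equivalence itself survives: when $\dim Q_p\cap Q_q>0$ the common ruling is necessarily of $\beta$-type, $v$ and $w$ then lie on a common linear subspace of $Q^x$, hence $\overline{vw}\subset Q^x$; and conversely the members of any line of $Q^x$ do share a common ruling. But proving these corrected statements uniformly for all Severi varieties is precisely the content of Zak's proposition, and nothing in your outline does it. Two smaller points: your closing ``airtight'' remark is circular, since Theorem~\ref{intersect-secant-quadric} by itself cannot exclude the positive-dimensional alternative---that exclusion is the very implication being proved; and since in Zak's book Corollary~3.5 comes after Proposition~3.3, building a proof of the latter on the former risks circularity at the level of the original source (harmless inside this paper, where both are quoted as black boxes, but worth flagging).
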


\section{Lines on secant cubic fourfolds}\label{Veronese-surface}
In this section, we focus on the Veronese surface, the two-dimensional Severi variety. The goal is to describe the variety of lines on the secant variety of the Veronese surface. We first recall some basic settings for Severi varieties.

Let $X^n\subset \PP^N$ be a Severi variety, and denote by $SX\subset \PP^N$ the secant variety. Consider the $(N-1)$-th Gauss map 
\begin{equation}\label{Gauss-map}
\gamma_{N-1}: SX\dashrightarrow SX^*,~ \gamma_{N-1}(z)=[\PT_z SX]
\end{equation}
of the hypersurface $SX$. Obviously the map $\gamma_{N-1}$ is not defined on the singular locus $X\subset SX$. Here $SX^*$ denotes the dual variety of $SX$, which is isomorphic to $X$, see \cite[\S III, Thm. 2.4]{Zak93}. The conormal variety
\[
\mathcal{C}:=\overline{\{(p, [H])\in (SX\setminus X)\times {\PP^N}^*~|~ \PT_p SX=H\}}
\]
of $SX$ gives a resolution of $\gamma_{N-1}$. The first projection $\mathcal{C}\ra SX$ can be viewed as the blowing-up of $SX$ along $X$. For the second projection $p: \mathcal{C}\ra SX^*$, the fibre of $p$ over a point $[\PT_z SX]\in (SX)^*$ is exactly the contact locus $\Sigma_z$ described in \eqref{contact-locus}. Let $\mathcal{E}$ be the exceptional divisor of the blowing-up $\mathcal{C}\ra SX$. The restriction of the fibre $\Sigma_z$ to $\mathcal{E}$ is the secant locus $Q_z$ as in \eqref{secant-loucs}, i.e.
\[
\mathcal{E}\cap \Sigma_z=X\cap \Sigma_z=Q_z,
\]
Hence $\mathcal{E}\ra SX^*$ is a smooth family of secant quadrics.

In order to study lines on the secant variety of the Veronese surface, we use a concrete description of the Gauss map $\gamma_{N-1}$ as follows.

Let $W$ be a $3$-dimensional vector space over an algebraically closed field $k$ with char$(k)=0$. Let us regard $\PP^5$ as the projective space $\PP(\Sym^2(W^*))$ of symmetric bilinear forms(or symmetric $3\times 3$ matrices) on $W$. Then the Veronese surface $X\subset \PP^5(\Sym^2 W^*)$ corresponds to the cone of the matrices of rank less than or equal to one, and the secant variety $SX\subset \PP(\Sym^2 W^*)$ corresponds to the cone of degenerate symmetric matrices.

Assume $[\varphi]\in SX\setminus X$ is a symmetric bilinear form on $W$. The kernel of $\varphi$ is a one-dimensional subspace of $W$. Therefore it defines a rational map
\begin{equation}\label{exp-Gauss-map}
\gamma: SX\dashrightarrow \PP(W), \varphi\mapsto [\Ker(\varphi)].
\end{equation}
The map $\gamma$ is not defined along the Veronese surface $X$ since the corresponding bilinear forms have rank one. The graph $\mathcal{H}$ of the rational map $\gamma$
\[
\begin{tikzcd}
& \mathcal{H}:=\{([\varphi], [v])\in SX\times \PP(W)~|~ \varphi(v)=0\} \ar[ld, "\tau"'] \ar[rd, "p"]&\\
SX && \PP(W)
\end{tikzcd}
\]
is a resolution of $\gamma$. The morphism $\tau: \HH\ra SX$ is the blowing-up of $SX$ along the singular locus $X$. The exceptional divisor $\EE$ of the blow-up can be realized as the incidence correspondence
\[
\EE\cong \{([\phi], [v])\in \PP(W^*)\times \PP(W) ~|~ \phi(v)=0\}.
\]
Therefore we can view the graph $\HH$ as the conormal variety $\mathcal{C}$ since both are blow-ups of $SX$ along $X$. 


Let $v\in W$ be any non-zero vector. Let
\[
\text{Ann}(v):=\{\varphi\in SX~|~ \varphi(v)=0\}
\]
be the cone of the symmetric bilinear forms annihilated by $v$. Suppose that $[v]$ represents a tangent hyperplane $[\PT_z SX]\in SX^*\cong \PP(W)$. By the identification $\mathcal{C}\cong \HH$ we have 
\begin{equation}\label{contact-Ann}
\Sigma_z= \PP(\text{Ann}(v)).
\end{equation} 

\begin{lemma}
Let $L\subset SX\setminus X$ be any non-secant line in $SX$. Then the image $\gamma(L)$ is a projective line in $\PP(W)$, i.e. $\deg_H \gamma(L)=1$ where $H$ is the hyperplane section class of $\PP(W)$.
\end{lemma}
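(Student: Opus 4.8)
The plan is to reduce the statement to a single line-bundle degree computation on $L\cong\PP^1$. First I would fix a parametrization $L=\{[sA+tB]:[s:t]\in\PP^1\}$ with $A,B\in\Sym^2 W^*$, and set $M(s,t):=sA+tB$. Since $L\subset SX$, every $M(s,t)$ is a degenerate form, so $\operatorname{rank}M(s,t)\le 2$; since $L$ is non-secant, i.e. $L\cap X=\emptyset$, no point of $L$ has rank $\le 1$. Hence $\operatorname{rank}M(s,t)=2$ \emph{identically}, $\Ker M(s,t)$ is one-dimensional for every $[s:t]$, and $\gamma|_L:\PP^1\ra\PP(W)$, $[s:t]\mapsto[\Ker M(s,t)]$, is a genuine morphism (equivalently, the strict transform of $L$ in $\HH$ is disjoint from $\EE$). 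By definition $\deg_H\gamma(L)$ is read off from the degree of $\gamma|_L^*\OO_{\PP(W)}(1)$ on $\PP^1$.

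Next I would package $M$ as a morphism of vector bundles on $\PP^1$. The linear dependence on $(s,t)$ makes $M$ a homomorphism $\phi:W\otimes\OO_{\PP^1}\ra W^*\otimes\OO_{\PP^1}(1)$, $w\mapsto M(s,t)(w,-)$. Because $\operatorname{rank}M(s,t)=2$ at every point, $\phi$ has \emph{constant rank} $2$, so $\ker\phi$, $\operatorname{im}\phi$, $\operatorname{coker}\phi$ are all locally free; write $\ker\phi\cong\OO_{\PP^1}(-d)$ and $\operatorname{coker}\phi\cong\OO_{\PP^1}(e)$. The tautological inclusion $\ker\phi\hookrightarrow W\otimes\OO_{\PP^1}$ is precisely the one defining $\gamma|_L$, so $\gamma|_L^*\OO_{\PP(W)}(-1)\cong\ker\phi$ and $\deg_H\gamma(L)=d$. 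It therefore suffices to prove $d=1$.

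Two relations pin down $d$. A first–Chern-class chase through $0\ra\ker\phi\ra W\otimes\OO\ra\operatorname{im}\phi\ra 0$ and $0\ra\operatorname{im}\phi\ra W^*\otimes\OO(1)\ra\operatorname{coker}\phi\ra 0$ gives $c_1(\operatorname{im}\phi)=d$ and $c_1(W^*\otimes\OO(1))=c_1(\operatorname{im}\phi)+e$, i.e. $3=d+e$. The second relation uses that $M$ is \emph{symmetric}: the dual morphism $\phi^\vee:W\otimes\OO_{\PP^1}(-1)\ra W^*\otimes\OO_{\PP^1}$ is identified with $\phi\otimes\OO_{\PP^1}(-1)$, so $\ker\phi^\vee\cong(\ker\phi)(-1)\cong\OO_{\PP^1}(-d-1)$; on the other hand, for a constant-rank morphism $\ker\phi^\vee\cong(\operatorname{coker}\phi)^\vee\cong\OO_{\PP^1}(-e)$. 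Comparing yields $e=d+1$, and together with $3=d+e$ this forces $d=1$. Finally, $d=1$ already excludes $\gamma(L)$ being a point, so $\deg(\gamma|_L\text{ onto its image})\cdot\deg_H\gamma(L)=1$ makes $\gamma(L)$ a line.

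The step needing the most care, and the real crux, is the symmetry identification $\phi^\vee\cong\phi(-1)$ combined with the constant-rank hypothesis: both are essential, the former to get the clean duality $\ker\phi^\vee\cong(\operatorname{coker}\phi)^\vee$, the latter to guarantee local freeness and that $\gamma|_L$ is everywhere defined. Concretely the same fact can be seen via the adjugate: $\operatorname{adj}M(s,t)$ is a symmetric rank-one matrix with entries homogeneous of degree $2$ in $(s,t)$ whose column space is $\Ker M(s,t)$, and the content of the argument is that this rank-one symmetric family equals $\pm\, v(s,t)\,v(s,t)^{\mathsf T}$ with $v(s,t)$ of degree $1$; the vector-bundle computation is the coordinate-free extraction of that linear kernel vector, which is exactly the claimed degree-one image.
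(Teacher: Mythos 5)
Your proof is correct, but it takes a genuinely different route from the paper. The paper works on the blow-up $\HH$ of $SX$ along $X$ (the graph of the Gauss map): writing $C=\gamma(L)$, it reduces to showing $C\cdot H_2=2$ for the hyperplane class $H_2$ of $\PP(\Sym^2 W)$ (since the Veronese embedding is given by quadrics), and computes this via the projection formula together with the relation $\EE=2\tau^*H_1-p^*H_2$ in $\Pic(\HH)$ imported from Ein--Shepherd-Barron; the key numerical point is that $\tau_*\EE$ vanishes as a divisor class because $X$ has codimension two in $SX$. Your argument never leaves the line $L$: you exploit that $L$ is a pencil of symmetric $3\times 3$ matrices of constant rank $2$, encode it as a constant-rank bundle map $\phi\colon W\otimes\OO_{\PP^1}\ra W^*\otimes\OO_{\PP^1}(1)$, and pin down $\deg\ker\phi$ by combining the Chern-class relation $d+e=3$ with the self-duality $\phi^\vee\cong\phi(-1)$ forced by symmetry, which gives $e=d+1$ and hence $d=1$. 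What each approach buys: the paper's proof is short once the ESB Picard relation is granted, but depends on that external input; yours is elementary and self-contained, gives the slightly finer statement that $\gamma|_L$ is birational (degree one) onto its image rather than just that the image has degree one, and is in effect a rigorous, coordinate-free version of the adjugate computation that the paper only sketches in its follow-up Remark (where it asserts, without detail, that all kernels lie in the fixed plane $\langle\Ker(\varphi_1),\Ker(\varphi_2)\rangle$). The trade-off is that your method is tied to the symmetric-matrix model of the Veronese case, whereas the intersection-theoretic scheme of the paper is the one that would transport to the other Severi varieties.
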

\begin{proof}
We denote by $C$ the curve $\gamma(L)$ in $\PP(W)$. Let $H_2$ be the hyperplane section class on $\PP(\Sym^2 W)$. The Veronese embedding $\PP(W)\hookrightarrow \PP(\Sym^2 W)$ is given by the linear system of quadrics on $\PP(W)$. Hence it suffices to show 
\[
\deg C\cdot H_2=2.
\]

Let $\widetilde{L}$ be the proper transform of $L$ under the blowing-up $\tau$. By the projection formula we have
\[
C\cdot H_2= \widetilde{L}\cdot p^*H_2=L\cdot \tau_*p^*H_2.
\]
The second equality holds since $L\cap X=\emptyset$ and thus $\tau^*L=\widetilde{L}$. Let $H_1$ be the hyperplane section class on $\PP(\Sym^2 W^*)$. From the work of Ein and Shepherd-Barron, see \cite[(2.0.3)]{ESB89}, one can deduce the following relation
\[
\EE=2\tau^*H_1-p^*H_2 \text{~in~} \Pic(\HH).
\]
Here the notation $H_1$ (resp. $H_2$) indicates the induced hyperplane class on $SX$ (resp. $SX^*$). It follows that
\[
C\cdot H_2=L\cdot (2H_1-\tau_*\EE)=2.
\]
\end{proof}

\begin{remark}
In fact, there is a more direct way to show $\gamma(L)$ is a projective line in $\PP(W)$. Let $\varphi_1, \varphi_2$ be two symmetric bilinear forms that correspond to any two points in $L$. By the deifnition of the map \eqref{exp-Gauss-map}, the curve $\gamma(L)$ is represented by the hyperplane
\[
\langle \Ker(\varphi_1), \Ker(\varphi_2) \rangle.
\] 
of $W$ generated by the kernels of $\varphi_1, \varphi_2$. It is easy to check the hyperplane is independent of the choice of $\varphi_1, \varphi_2$ in $L$.
\end{remark}

The above equivalent characterization of non-secant lines arises the following key observation. It is essential to determine the geometric shape of the family of non-secant lines. 
\begin{lemma}\label{line-tangent-space}
Let $X$ be the Veronese surface, and let $SX$ be the secant variety. For any non-secant line $L\subset SX\setminus X$, there exists a unique $x\in X$ such that $L\subset \PT_x X$.
\end{lemma}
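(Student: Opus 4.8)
The plan is to work entirely in the matrix model $\PP^5=\PP(\Sym^2 W^*)$, in which $SX$ is the locus of degenerate symmetric forms and $X$ the rank-one locus. Write $L=\PP(P)$ for a two-dimensional subspace (pencil) $P\subset \Sym^2 W^*$. Since $L\cap X=\emptyset$, every nonzero $\varphi\in P$ has rank exactly two, so the map $\gamma$ of \eqref{exp-Gauss-map} is defined on all of $L$ and $\Ker\varphi$ is a line. By the preceding lemma and remark, $\gamma(L)=\PP(U)$ for the two-dimensional subspace $U:=\langle \Ker\varphi_1,\Ker\varphi_2\rangle\subset W$, where $\varphi_1,\varphi_2$ span $P$; in particular $\Ker\varphi\subset U$ for every $\varphi\in P$.

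First I would translate the desired conclusion into a linear-algebra statement. A short computation in a basis adapted to $U$ shows that for $x=[\ell^2]\in X$ the affine tangent space $\PT_x X=\ell\cdot W^*$ consists precisely of the symmetric forms $\varphi$ with $\varphi|_{\Ker\ell}=0$. Hence it suffices to produce $\ell$ with $\Ker\ell=U$ and to prove that $\varphi|_U=0$ for every $\varphi\in P$; the point $x=[\ell^2]$ will then satisfy $L\subset \PT_x X$.

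The heart of the argument is the restriction map $\rho\colon P\to \Sym^2 U^*$, $\varphi\mapsto \varphi|_U$. For any nonzero $\varphi\in P$ the inclusion $\Ker\varphi\subset U$ gives $\Ker(\varphi|_U)\supseteq \Ker\varphi$, a line, so $\varphi|_U$ has rank at most one. Thus $\rho(P)$ is a linear subspace of $\Sym^2 U^*\cong k^3$ contained in the rank-$\le 1$ locus, which is the quadric cone $\{\det=0\}$ cut out by a nondegenerate ternary quadratic form; such a cone contains no two-plane through the origin, so $\dim\rho(P)\le 1$. To finish I would rule out $\dim\rho(P)=1$: if $\rho(P)=\langle q_0\rangle$ with $q_0$ of rank one and kernel line $\langle v_0\rangle\subset U$, then every $\varphi\in P$ with $\varphi|_U\neq 0$ has $\Ker\varphi=\langle v_0\rangle$; as this holds on a dense open subset of $L$, the morphism $\gamma|_L$ would be constant, contradicting that $\gamma(L)$ is a line. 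Therefore $\rho=0$, i.e. $\varphi|_U=0$ for all $\varphi\in P$.

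Taking $\ell$ to be the functional with $\Ker\ell=U$ and $x=[\ell^2]$ then gives $L\subset \PT_x X$. For uniqueness, if also $L\subset \PT_{x'}X$ with $x'=[(\ell')^2]$, then every $\varphi\in P$ lies in $\ell'\cdot W^*$, so $\Ker\varphi\subset \Ker\ell'$ and hence $U\subseteq \Ker\ell'$; comparing dimensions forces $\Ker\ell'=U$, so $\ell'$ is proportional to $\ell$ and $x'=x$ (compatibly with Lemma \ref{unique-tangency}). The main obstacle is the middle step—showing that the whole pencil restricts to zero on $U$ rather than merely that each member has low rank on $U$; the essential leverage is the non-constancy of the kernel map, which is exactly what the preceding lemma supplies.
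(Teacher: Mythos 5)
Your proof is correct, but it takes a genuinely different route from the paper's. Both arguments begin identically: the candidate point $x$ is cut out by the plane $U\subset W$ spanned by the kernels, using the preceding lemma that $\gamma(L)$ is a line. From there the paper stays inside Zak's secant-locus machinery: it computes $\bigcap_{p\in L}\Sigma_p=\{x\}$ via annihilators, invokes Theorem \ref{intersect-secant-quadric} to conclude $Q_p\cap Q_q=\{x\}$ for all $p,q\in L$ (for the Veronese the secant loci are smooth conics, so only the single-point case can occur), and then runs a synthetic chord argument --- if $\langle x,p\rangle$ and $\langle x,q\rangle$ were properly secant, meeting $X$ again at $y$ and $z$, the line $\langle y,z\rangle$ would meet $L$ at some $r$, forcing $y\in Q_r\cap Q_p=\{x\}$, a contradiction --- so every chord $\langle x,p\rangle$ is tangent and $L\subset\PT_x X$. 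You replace all of this with linear algebra in the matrix model: the restriction map $\rho\colon P\to\Sym^2 U^*$ lands in the rank-$\leq 1$ locus, a nondegenerate quadric cone in $k^3$ containing no $2$-plane, so $\dim\rho(P)\leq 1$; and $\dim\rho(P)=1$ would make $\gamma|_L$ constant on a dense open set, contradicting the same lemma. Hence $P$ restricts to zero on $U$, and the identity $\{\varphi\mid \varphi|_U=0\}=\ell\cdot W^*$ (with $\Ker\ell=U$) gives both the tangency and, read in reverse, the uniqueness. What your approach buys is elementarity and self-containedness: no appeal to Theorem \ref{intersect-secant-quadric} or to the secant loci $Q_p$ at all. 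What the paper's approach buys is that its formulation through contact and secant loci is exactly the template --- observation $(\star)$ --- that drives the higher-dimensional analysis of Section 3 (Propositions \ref{5-types-line} and \ref{lines-in-tangent}), where no comparably simple pencil-of-matrices computation is available.
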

\begin{proof}
Let us set $U\subset W$ to be the hyperplane associated to the projective line $\gamma(L)\subset \PP(W)$. Let $\phi\in W^*$ be the linear map annihilated on $U$. Denote by $x\in X\cong \PP(W^*)$ the point corresponding to $\phi$. We claim that $L$ is contained in the embedded tangent space of $X$ at $x$. 

For any $p\in L$, as shown in \eqref{contact-Ann} we have $\Sigma_p=\PP(\text{Ann}(\gamma(p)))$. It follows that
\begin{equation}\label{single-intersection-secant}
\bigcap_{p\in L}\Sigma_p=\PP(\text{Ann}(U))=[\phi].
\end{equation}
The line $L$ is not secant to $X$. Then for any $p\neq q\in L$ we have $\Sigma_p\neq \Sigma_q$. It implies $\Sigma_p\cap \Sigma_q\subset X$, and thus
\begin{equation}\label{equal-intersection}
\Sigma_p\cap \Sigma_q=\Sigma_p\cap \Sigma_q\cap X=Q_p\cap Q_q.
\end{equation}
By Theorem \ref{intersect-secant-quadric} the intersection $Q_p\cap Q_q$ is either a single point or a linear subspace. In two-dimensional case, the secant locus is a smooth conic. Therefore $Q_p\cap Q_q$ consists of a single point. Combine \eqref{single-intersection-secant} and \eqref{equal-intersection} the point must be $\{x\}$ for any $p, q\in L$. 

In the next we show that all points $p\in L$ lie in $\PT_x X$. Suppose the lines $\langle x, p\rangle$ and $\langle x, q\rangle$ of two distinct points $p, q\in L$ are not tangent to $X$ at $x$. Then they are properly secant to $X$ so that $\langle x, p\rangle$ and $\langle x, q\rangle$ meet with $X$ at another points $y$ and $z$ respectively. We can see that the line $\langle y, z\rangle$ must intersect with $L$ at another point $r\neq p$. Then we obtain $y\in Q_r\cap Q_p$, which implies $y=x$. Therefore the chord $\langle x, p\rangle$ must be tangent to $X$ at $x$, which implies $L\subset \PT_x X$.

The point $x$ is the unique point of $X$ satisfying $L\subset \PT_x X$. Indeed for any $y\in X$ satisfying $L\subset \PT_y X$, we have $y\in Q_p\cap Q_q$ for all $p, q\in L$.
\end{proof}

\begin{remark}\label{unique-tangent-intersect}
For any $x\in X$, the embedded tangent space $\PT_x X$ meets with $X$ only at $x$. Otherwise assume there exists another $y\in \PT_x X\cap X$. Since $X$ contains no straight line in $\PP^5$, we can pick a point $p\in \langle x, y \rangle-X$. Then the chord $\langle x, y \rangle$ is tangent to the smooth conic $Q_p$ at $x$, which could happen if and only if $x=y$.
\end{remark}

\begin{proposition}\label{lines-secant-Veronese}
Let $X$ be the Veronese surface. The variety $F(SX)$ of lines on the secant variety $SX$ is the union of two irreducible components $\mathcal{C}_i, i=1,2$. The component $\mathcal{C}_1$ parametrizes all lines secant to $X$, which is isomorphic to $\Hilb^2(X)$. The family of non-secant lines in $SX$ forms a Zariski open subset of the component $\mathcal{C}_2$, and $\mathcal{C}_2$ is a $\PP^2$-bundle over $X$. Moreover, the intersection $\mathcal{C}_1\cap \mathcal{C}_2$ is the family of tangent lines to $X$.
\end{proposition}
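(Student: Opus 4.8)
The plan is to stratify $F(SX)$ according to how a line meets the singular locus $X$, and to identify the two resulting families with $\Hilb^2(X)$ and with a $\PP^2$-bundle over $X$. Write $\mathcal{C}_1$ for the closure in $F(SX)$ of the locus of lines properly secant to $X$, and $\mathcal{C}_2$ for the closure of the locus of non-secant lines $L$ (those with $L\cap X=\emptyset$). I will show that each is an irreducible fourfold, that together they exhaust $F(SX)$, and that they meet exactly along the tangent lines.

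For $\mathcal{C}_1$ the key preliminary observation is that any line meeting $X$ meets it in a subscheme of length at most $2$: since the Veronese surface is cut out by the $2\times 2$ minors of the symmetric matrix, a line meeting $X$ in length $\ge 3$ would be forced into every such quadric and hence into $X$, contradicting the fact that $X$ contains no line (already used in Remark \ref{unique-tangent-intersect}). Consequently $Z\mapsto \langle Z\rangle$ defines a morphism $\Hilb^2(X)\to G(1,5)$ whose image is precisely the family of secant and tangent lines; it is injective because $Z=\langle Z\rangle\cap X$ is recovered scheme-theoretically (the intersection has length $\le 2$ and contains $Z$, so equals $Z$). As $\Hilb^2(X)$ is smooth and irreducible of dimension $4$ by Fogarty's theorem, I expect this to be a closed immersion with image $\mathcal{C}_1$. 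Proper chords lie in $SX$ by definition of the secant variety, and tangent lines lie in $\PT_xX\subset SX$, so the image is contained in $F(SX)$.

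For $\mathcal{C}_2$, Lemma \ref{line-tangent-space} attaches to each non-secant line $L$ a unique $x\in X$ with $L\subset \PT_xX$, and Remark \ref{unique-tangent-intersect} gives $\PT_xX\cap X=\{x\}$, so conversely every line in the plane $\PT_xX\cong\PP^2$ avoiding $x$ is non-secant. Hence the non-secant lines form the open locus $\{(x,L): L\subset\PT_xX,\ x\notin L\}$ inside the incidence variety $\{(x,L): L\subset \PT_xX\}$, which is a $\PP^2$-bundle over $X$ since the lines in a $\PP^2$ form the dual $\PP^2$. The projection to $G(1,5)$ is injective on the whole bundle: for a tangent line through $x$, any second point $x'$ with $L\subset\PT_{x'}X$ would lie in $X\cap L\subset\PT_{x'}X\cap X=\{x'\}$, forcing $x'=x$. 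So $\mathcal{C}_2$ is this $\PP^2$-bundle, irreducible of dimension $4$, and the non-secant lines are the complement of the $\PP^1$-subbundle of lines through $x$. Those boundary lines are exactly the tangent lines, which gives one inclusion $\mathcal{C}_1\cap\mathcal{C}_2\supseteq\{\text{tangent lines}\}$; conversely a line in both is secant and contained in some $\PT_xX$, and were it a proper secant its two points of $X$ would both lie in $\PT_xX\cap X=\{x\}$, so it must be tangent.

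The main obstacle is completeness: proving $F(SX)=\mathcal{C}_1\cup\mathcal{C}_2$, i.e. that every line meeting $X$ is secant or tangent and none meets $X$ at a single transverse point—the phenomenon that does occur for higher Severi varieties but must be excluded here. I plan to analyze a line $L\subset SX$ through a point $x\in X$ via the tangent cone of $SX$ at $x$. In the matrix model this cone is the leading form of the determinant, namely the $2\times 2$ minor complementary to $x$; it is a rank-$3$ quadric cone in the projectivized tangent space $\PP^4$ at $x$, whose vertex is precisely the $\PP^1$ of directions tangent to $X$. The direction of $L$ at $x$ must lie on this cone: if it lies in the vertex then $L$ is tangent to $X$, so $L\subset\PT_xX$ and $L\in\mathcal{C}_1\cap\mathcal{C}_2$; if it is an off-vertex ruling then the associated pencil of everywhere-degenerate conics acquires a second double line, so $L$ returns to $X$ at a second point and is a genuine secant. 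This dichotomy excludes transverse single-point lines and yields $F(SX)=\mathcal{C}_1\cup\mathcal{C}_2$. The delicate step will be to run the off-vertex analysis uniformly—equivalently, to invoke the classification of pencils of degenerate conics in $\PP^2$ via Kronecker/Segre normal forms—to confirm that such a pencil containing one double line contains a second, which is exactly the dimension-$2$ input already encoded in Theorem \ref{intersect-secant-quadric}.
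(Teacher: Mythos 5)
Your identification of the two components coincides with the paper's own route: $\mathcal{C}_1\cong\Hilb^2(X)$ via spans of length-two subschemes, and $\mathcal{C}_2$ as the injective image of the relative Grassmannian of lines in embedded tangent planes, using Lemma \ref{line-tangent-space}, the equality $SX=TX$, and Remark \ref{unique-tangent-intersect} for the intersection $\mathcal{C}_1\cap\mathcal{C}_2$; on these points you are if anything more careful than the paper (the length $\leq 2$ bound via the quadrics cutting out $X$, and injectivity of the bundle map along the boundary tangent lines, which the paper's appeal to Lemma \ref{line-tangent-space} alone does not literally cover, since that lemma concerns only lines disjoint from $X$). The genuine divergence is the exhaustion step $F(SX)=\mathcal{C}_1\cup\mathcal{C}_2$: the paper's proof never argues this, relying instead on the assertion, stated without proof at the start of Section 2, that a line in $SX$ meeting $X$ is secant or tangent --- i.e.\ that the transverse one-point lines (type III of Proposition \ref{5-types-line}, which do occur for $n\geq 4$) are absent for the Veronese surface. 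You correctly single this out as the main obstacle and settle it by the tangent-cone analysis: at $x\in X$ the tangent cone of $SX$ is the rank-three quadric given by the complementary $2\times 2$ minor, whose vertex is the $\PP^1$ of directions of $\PT_x X$; a vertex direction puts $L$ inside $\PT_x X$ (hence tangent), while an off-vertex direction, combined with $L\subset SX$ (so $\det M=0$), forces the pencil to contain a second rank-one matrix, making $L$ a proper secant. This computation does go through (after normalizing the rank-one lower block of $M$ one finds $\det M=-b_2^2$, hence $b_2=0$, and the rank-one locus on the line is $t\bigl(s-t(b_1^2-a)\bigr)=0$, two distinct points), so your proposal is correct and in fact supplies a step the paper leaves implicit; what your version buys is a self-contained proof in the matrix model, whereas the paper's synthetic secant-locus formalism is the one that generalizes to the higher Severi varieties. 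One small caveat on your side: injectivity of $\Hilb^2(X)\to G(1,5)$ alone does not give a closed immersion (one also needs injectivity on tangent vectors), but this is at the same level of rigor as the paper's one-line identification of $\mathcal{C}_1$.
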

\begin{proof}
Any line secant to $X$ is determined by two distinct points or a direction supported at one point in $X$. Hence the component $\mathcal{C}_1$ of the secant lines is isomorphic to $\text{Hilb}^2(X)$. 

For the component of non-secant lines, we consider the embedded tangent bundle
\[
\PT X:=\{(x, p)\in X\times \PP^5 ~|~ p\in \PT_x X\}
\] 
of $X$. The image of the projection $\psi: \PT X\ra \PP^5$ is the tangential variety $TX$ of $X$. Let $Gr(\PT X/X)$ be the Grassmannian of lines relatively to the tangent bundle $\PT X\ra X$. It parametrizes the lines in $SX$ lying in embedded tangent spaces of $X$. For a Severi variety $X$ we have $SX=TX$. Hence the projection $\psi$ induces a morphism
\[
\Psi: Gr(\PT X/X)\ra F(SX), (x, [L\subset\PT_x X])\mapsto [L].
\]
By the result of Lemma \ref{line-tangent-space}, the morphism $\Psi$ is injective. We define the image of $\Psi$ to be the component $\mathcal{C}_2$ in our assertion. Since the embedded tangent space $\PT_x X$ is isomorphic to $\PP^2$, the Grassmannian $Gr(\PT X/X)$ of lines is isomorphic to a $\PP^2$-bundle over $X$. 

By Remark \ref{unique-tangent-intersect}, any line $[L]\in \mathcal{C}_2$ meeting with $X$ must be a tangent line of $X$ passing through some $x$. Therefore $\mathcal{C}_1\cap \mathcal{C}_2$ is the family of tangent lines of $X$, which is isomorphic to a $\PP^1$-bundle over $X$.
\end{proof}

\section{Geometry of lines on the secant variety} \label{projection}
As we have seen, the key observation to study the lines on the secant variety of the Veronese surface is:

$(\star)$ If $L\subset SX\setminus X$ is a non-secant line, there exists a unique $x\in X$ lying in the secant locus $Q_p$ for all $p\in L$, and $L$ is contained in the embedded tangent space $\PT_x X$.

This observation seems useful to describe the variety of lines on the secant variety. Unfortunately, it is not always true for higher dimensional cases. Later we will find for the Serge fourfold $X$, there exist non-secant lines $L\subset SX\setminus X$ not contained in any embedded tangent space, see Proposition \ref{lines-in-tangent}. Nevertheless, this observation provides a guideline for our study in higher dimensional cases. In Proposition \ref{5-types-line} we divide the lines on the secant varieties into five types, unlike the case of the Veronese surface, which only has two types---secant and non-secant. Then it is not hard to check for which type of lines are contained in embedded tangent spaces, see Proposition \ref{lines-in-tangent}. At the end, we describe the variety of lines on the secant varieties, see Theorem \ref{main-result}.  

Suppose $L\subset SX\setminus X$ is any non-secant line. Let $p\in L$ be any point. We have $L\subset \PT_p SX$ and $\Sigma_p\cap L=\{p\}$. Consider the projection map
\begin{equation}\label{secant-proj}
\pi_p: SX\cap \PT_p SX\dashrightarrow \PP^{n-1}
\end{equation}
along the contact locus $\Sigma_p$. Then $L$ lies in one fibre of the projection. In such a way, all non-secant lines in $SX$ is contained in fibres of such projections. Analyzing the positions of lines in fibres leads to the desired classification of lines in Proposition \ref{5-types-line}.


\begin{lemma}\label{fibre-projection}
Let $X^n\subset \PP^N, n=\frac{2}{3}(N-2)$ be a Severi variety of dimension $n>2$. Fix a point $p\in SX\setminus X$. Denote by $H_p\subset \PP^N$ the embedded tangent hyperplane $\PT_p SX$ at $p$, and $\Sigma_p$ the contact locus of $H_p$. Let $Bl_{\Sigma_p} SX\cap H_p$ be the blow-up of $SX\cap H_p$ along $\Sigma_p$. Consider the induced morphism 
\[
\tilde{\pi}_p: Bl_{\Sigma_p} SX\cap H_p\ra \PP^{n-1}
\]
For any $u\in (SX\cap H_p)\setminus \Sigma_p$, the fibre of $\tilde{\pi}_p$ over the point $\pi_p(u)$ is either
\begin{enumerate}[1)]
\item $S(u, \PT_x Q_p)$ where $u\in \PT_x X$ and such $x$ is unique;
\item or the linear space $S(u, \Sigma_p)$.
\end{enumerate}
\end{lemma}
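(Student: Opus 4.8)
The plan is to realise the fibre through $u$ as a linear section of the cubic $SX$ and to extract its structure from two ingredients: the local equation of $SX$ along the contact locus, and Zak's theorem on intersections of secant quadrics. First I would set up the fibre. Since $\Sigma_p\subset H_p$ and $u\in H_p\setminus\Sigma_p$, the linear span $M_u:=\langle\Sigma_p,u\rangle=S(u,\Sigma_p)$ is a linear subspace of $H_p$ of dimension $\frac n2+2$ in which $\Sigma_p$ is a hyperplane, and the fibre of $\pi_p$ over $\pi_p(u)$ is $SX\cap M_u$ (the fibre of $\tilde\pi_p$ being its strict transform). Let $f$ be the cubic defining $SX$ and $\ell$ the linear form cutting out $\Sigma_p$ in $M_u$. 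By definition of the contact locus, $H_p=\PT_pSX$ is tangent to $SX$ along all of $\Sigma_p$, so $SX\cap H_p$ is singular along $\Sigma_p$; hence $f|_{M_u}$ vanishes to order at least $2$ along the hyperplane $\Sigma_p$, and therefore either $f|_{M_u}\equiv 0$ or $f|_{M_u}=\ell^{2}\ell'$ for some linear form $\ell'$. In the first case $M_u\subset SX$ and the fibre is $M_u=S(u,\Sigma_p)$, which is alternative (2).

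Now suppose $f|_{M_u}=\ell^{2}\ell'$; then $\ell'\neq\ell$ (else $u\in\Sigma_p$), the fibre is $\Sigma_p\cup\Pi$ with $\Pi:=\{\ell'=0\}$ a hyperplane of $M_u$ through $u$, and on the blow-up the doubled component $\Sigma_p$ is contracted, so the fibre of $\tilde\pi_p$ is the strict transform of $\Pi$. The crucial point is the identity $X\cap M_u=Q_p$, which I would prove by comparing multiplicities: any $z\in X\cap M_u$ is a singular point of $SX$, so $\mathrm{mult}_z(f|_{M_u})\ge\mathrm{mult}_z f\ge 2$; but a one-line gradient computation shows that every zero of $\ell^{2}\ell'$ lying off $\Sigma_p$ is a smooth point of $f|_{M_u}$. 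Hence $z\in\Sigma_p$, and $X\cap M_u=X\cap\Sigma_p=Q_p$. In particular $u\notin X$, so the secant quadric $Q_u$ is a smooth $\frac n2$-dimensional quadric, and $Q_u\neq Q_p$ (otherwise $u\in\Sigma_u=\Sigma_p$); by Theorem \ref{intersect-secant-quadric} the intersection $Q_p\cap Q_u$ is non-empty.

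For any $x'\in Q_p\cap Q_u$ the line $\langle u,x'\rangle$ is secant to $X$ (as $x'\in Q_u$), lies in $M_u$, and meets the hyperplane $\Sigma_p\supset Q_p=X\cap M_u$ only in $x'$; hence it meets $X$ only in $x'$ and must be tangent to $X$ there, i.e. $u\in\PT_{x'}X$. Consequently $\PT_{x'}Q_p=\PT_{x'}X\cap\Sigma_p$ (Lemma \ref{quadric-tangent-locus}) lies in $W:=\Pi\cap\Sigma_p$, because every $y\in\PT_{x'}X$ gives $\langle u,y\rangle\subset\PT_{x'}X\subset SX$ and hence $y\in\Pi$; a dimension count then gives $\PT_{x'}Q_p=W$. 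Since $Q_p$ is a smooth quadric its Gauss map is injective, so this forces $Q_p\cap Q_u$ to be a single point $x$, simultaneously excluding Zak's positive-dimensional alternative and proving uniqueness of $x$. Finally $S(u,\PT_xQ_p)\subset\PT_xX\subset TX=SX$ and $S(u,\PT_xQ_p)\subset M_u$, so it is an irreducible linear space inside $SX\cap M_u=\Sigma_p\cup\Pi$ containing $u\notin\Sigma_p$; comparing dimensions it equals $\Pi$, which is alternative (1).

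The step I expect to be the main obstacle is the identity $X\cap M_u=Q_p$: it is what converts the purely algebraic factorisation $f|_{M_u}=\ell^{2}\ell'$ into the geometric statement that the residual hyperplane is a cone of tangent lines at a single point of $Q_p$. The multiplicity comparison is the essential input, and the injectivity of the Gauss map of the smooth quadric $Q_p$ is what both rules out Zak's higher-dimensional intersection and yields the uniqueness of $x$.
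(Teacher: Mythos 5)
Your proof is correct, and its core --- realizing the fibre as $SX\cap M_u$, observing that tangency of $H_p$ along $\Sigma_p$ forces the factorization $f|_{M_u}=\ell^2\ell'$ (so the fibre is $\Sigma_p$ doubled plus a residual hyperplane $\Pi$, or else all of $M_u$), and settling uniqueness by injectivity of the Gauss map of the smooth quadric $Q_p$ --- matches the paper's. Where you genuinely diverge is the existence of the tangent point: the paper quotes Zak's description $SX\cap H_p=T(Q_p,X)=\bigcup_{x\in Q_p}\PT_x X$ (\cite[\S IV, Rem. 4.3]{Zak93}), which immediately yields some $x\in Q_p$ with $u\in\PT_x X$ and hence $\Pi=S(u,\PT_x Q_p)$, and it proves uniqueness by comparing the residual hyperplanes $S(u,\PT_x Q_p)=S(u,\PT_y Q_p)$ and invoking Lemma \ref{unique-tangency}. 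You never use that description: you extract existence from the factorization itself, via the gradient computation giving $X\cap M_u=Q_p$, then $Q_u\neq Q_p$, non-emptiness of $Q_p\cap Q_u$ from Theorem \ref{intersect-secant-quadric}, and the tangent-line argument at a point of $Q_p\cap Q_u$. Your route is longer but more self-contained, and it proves en passant that $Q_p\cap Q_u$ is a single point whenever $u$ lies in a fibre of type 1) --- exactly the first assertion of Corollary \ref{position-positive-intersection}, which the paper only obtains later, by contradiction through Lemma \ref{second-fibre}. Two small points to tighten. First, the uniqueness demanded by the lemma concerns every $x'\in Q_p$ with $u\in\PT_{x'}X$, while you phrase the rigidity step for $x'\in Q_p\cap Q_u$; the step ``$x'\in Q_p$ and $u\in\PT_{x'}X$ imply $\PT_{x'}Q_p=W$'' uses only the tangency hypothesis, so it does give full uniqueness, but you should say you are applying it to an arbitrary tangent point, not only to points of $Q_p\cap Q_u$. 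Second, the parenthetical ``the line $\langle u,x'\rangle$ is secant to $X$ (as $x'\in Q_u$)'' deserves one more line: since $u\in\Sigma_u\setminus Q_u$ and $x'\in Q_u$, the line $\langle u,x'\rangle$ lies in $\Sigma_u$ and meets the quadric $Q_u$ in a length-two scheme; as its set-theoretic intersection with $X$ is $\{x'\}$, it must be tangent to $Q_u$, hence to $X$, at $x'$.
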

\begin{proof}
The fibre $\tilde{\pi}_p^{-1}(\pi_p(u))$ is obtained from the intersection 
\[
S(u, \Sigma_p)\cap SX.
\]
It is a cubic hypersurface if $SX$ properly intersects with $S(u, \Sigma_p)$. Otherwise $S(u, \Sigma_p)\subset SX$. 

Let us consider the first case. Notice that $H_p$ is tangent to $SX$ along $\Sigma_p$. Then the cubic $SX\cap S(u, \Sigma_p)$ is singular along $\Sigma_p$. It follows that the intersection$SX\cap S(u, \Sigma_p)$ is the union of $\Sigma_p$ with multiplicity $2$ and a residue hyperplane $P$. We show that $P=S(u, \PT_x Q_p)$ as follows.

According to \cite[\S IV, Rem. 4.3]{Zak93}, the hyperplane section $SX\cap H_p$ can be written as
\[
SX\cap H_p=T(Q_p, X)=\underset{x\in Q_p}{\bigcup} \PT_x X.
\] 
Recall Lemma \ref{quadric-tangent-locus}
\[
\Sigma_p\cap \PT_x X=\PT_x Q_p.
\] 
Then the restriction of the projection \eqref{secant-proj} on each embedded tangent space $\PT_x X$ can be viewed as
\[
\PT_x X\dashrightarrow \PT_x X/ \PT_x Q_p
\]
For any point $u\in SX\cap H_p\setminus \Sigma_p$, there exists some $x\in Q_p$ such that $u\in \PT_x X$. Then the linear subspace $S(u, \PT_x Q_p)$ is contained in $S(u, \Sigma_p)$, which deduces
\[
SX\cap S(u, \Sigma_p)=2\Sigma_p \cup S(u, \PT_x Q_p)
\]
and $P=S(u, \PT_x Q_p)$. Now it is not hard to show that such $x\in Q_p$ is unique. Assume that there exists another point $y\in Q_p$ such that $u\in \PT_y X$. Then we have
\[
SX\cap S(u, \Sigma_p)=2\Sigma_p \cup S(u, \PT_y Q_p).
\] 
It implies $S(u, \PT_x Q_p)=S(u, \PT_y Q_p)$, and in particular $\PT_x Q_p=\PT_y Q_p$. Then we obtain $x=y$ by Lemma \ref{unique-tangency}.

The case $2)$ may exists. Assume that the linear subspace $S(u, \Sigma_p)$ contains a point $x\in X\cap H_p-Q_p$. The cubic $SX\cap H_p$ is singular along $\Sigma_p$ and $x$. Therefore any line passing through $x$ and $\Sigma_p$ is contained in $SX\cap H_p$. It implies 
\[
S(x, \Sigma_p)=S(u, \Sigma_p)\subset SX\cap H_p.
\] 
\end{proof}

At the end of the above proof, the condition $S(u, \Sigma_p)\cap X-Q_p\neq \emptyset$ is sufficient for $S(u, \Sigma_p)\subset SX\cap H_p$. We show it is also a necessary condition in the next lemma.

\begin{lemma}\label{second-fibre}
Keep the same notations in Lemma \ref{fibre-projection}. Let $X$ be a Severi variety with $\dim X>2$ For any $u\in (SX\cap H_p)\setminus \Sigma_p$, the linear subspace $S(u, \Sigma_p)$ is contained in $SX$ if and only if 
\[
\exists z\in S(u, \Sigma_p)\cap X-Q_p.
\]
\end{lemma}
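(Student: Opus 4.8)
The sufficiency is already the content of case $2)$ in the proof of Lemma~\ref{fibre-projection}: if $S(u,\Sigma_p)$ meets $X-Q_p$ in a point $z$, then $z$ and every point of $\Sigma_p$ are singular points of the cubic $SX\cap H_p$, so each line joining $z$ to $\Sigma_p$ meets $SX\cap H_p$ with multiplicity at least $4$ and hence lies in it; comparing dimensions gives $S(z,\Sigma_p)=S(u,\Sigma_p)\subset SX$. So the plan is to prove necessity. I would argue it directly: assume $\Lambda:=S(u,\Sigma_p)\subset SX$ and (discarding the trivial case in which $u$ itself lies in $X-Q_p$) that $u\notin X$, and produce a point of $X$ lying in $\Lambda$ but off $\Sigma_p$. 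Since $Q_p=\Sigma_p\cap X$, any such point is the required $z$. The first reduction I would make is to a statement about lines: since a line $m\subset\Lambda$ with $m\not\subset\Sigma_p$ meets the hyperplane $\Sigma_p$ of $\Lambda$ in a single point, it suffices to find one such $m$ meeting $X$ in at least two points; at least one of them is then off $\Sigma_p$, hence off $Q_p$. Note also that any line contained in $SX$ but not in $X$ meets $X=\mathrm{Sing}(SX)$ in a subscheme of length at most two, namely the common zero locus of the restricted partials $\partial_iF|_m$, so ``length two'' is the generic maximum and is exactly what I must realize.

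Next I would record the structural facts I expect to use. Because $\Lambda\subset H_p$, we have $\Lambda\subset SX\cap H_p=T(Q_p,X)=\bigcup_{y\in Q_p}\PT_yX$, so there is some $x\in Q_p$ with $u\in\PT_xX$; moreover $\mathrm{Sing}(SX\cap H_p)=\Sigma_p\cup(X\cap H_p)$, and $\Sigma_p$ is a hyperplane of $\Lambda$ along which $SX\cap H_p$ is singular. The mechanism I would exploit is the \emph{failure of uniqueness} of this $x$. In the proper–intersection situation of Lemma~\ref{fibre-projection} one has $SX\cap\Lambda=2\Sigma_p+S(u,\PT_xQ_p)$, whose singular locus inside $\Lambda$ is exactly $\Sigma_p$; from $X\cap\Lambda\subset\mathrm{Sing}(SX\cap\Lambda)$ this forces $X\cap\Lambda=Q_p$ and, through Lemma~\ref{unique-tangency}, makes $x$ unique. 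Under the present hypothesis $\Lambda\subset SX$ the residual hyperplane disappears and this rigidity must break: I expect that $u$ then lies in $\PT_yX$ for \emph{several}, in fact a positive–dimensional family of, points $y\in Q_p$. Given two distinct such points $y_1,y_2$, I would study the plane $\Pi=\langle u,y_1,y_2\rangle\subset\Lambda\subset SX$ and the conic system $\{\partial_iF|_\Pi\}$ cutting out $X\cap\Pi$: it vanishes at $y_1,y_2$ with the prescribed tangent directions $\langle u,y_i\rangle\subset\PT_{y_i}X$, and the resulting base‑point constraints should force either a third base point off $\langle y_1,y_2\rangle$ or a line component, either of which supplies a line $m$ of the required kind.

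The main obstacle is to make the existence of such extra points of $X$ uniform across all four Severi varieties. A naive parameter count is not enough: the base locus $X\cap\Lambda$ is the indeterminacy locus of $\gamma_{N-1}|_\Lambda\colon\Lambda\dashrightarrow\Lambda^*\cong\PP^{\,n-1}$, cut out, after dividing out the equation of $\Sigma_p$ along which $\gamma_{N-1}$ contracts, by $n-1$ linear forms and one quadric; this only guarantees a component of $X\cap\Lambda$ off $\Sigma_p$ when $n\le 6$, and is inconclusive precisely for the Grassmannian and Cartan cases $n=8,16$, where the forms could a priori be independent. To close this gap I would feed in the fine geometry of secant loci. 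For $q$ ranging over $\Lambda$ one has $q\in S(x,X)\setminus X$, so Lemma~\ref{family-secant-loci-cor} applies to the quadric family $Q^x$, and together with Theorem~\ref{intersect-secant-quadric} it should force the intersections $Q_p\cap Q_q$ to be positive–dimensional linear spaces of dimension $\tfrac n4$ for $q$ in the relevant locus of $\Lambda$; this surplus incidence is exactly what is missing from the dimension count and is what I expect to pin down $X\cap\Lambda\supsetneq Q_p$. The delicate step, and the one I anticipate will demand the most care, is translating these positive–dimensional secant–locus intersections back into an honest point of $X$ lying on a line of $\Lambda$ transverse to $\Sigma_p$.
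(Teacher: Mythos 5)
Your treatment of sufficiency is correct and is exactly the paper's (it is the final paragraph of the proof of Lemma \ref{fibre-projection}). The problem is necessity, where your text is a research plan rather than a proof, and the two steps you leave open are precisely the hard ones. First, your central mechanism --- that under $\Lambda:=S(u,\Sigma_p)\subset SX$ the point $u$ itself lies in $\PT_y X$ for a positive-dimensional family of $y\in Q_p$ --- is never established; you write ``I expect that'' and concede that your dimension count is inconclusive exactly in the cases $n=8,16$. Note that this claim is equivalent to $\dim Q_u\cap Q_p>0$, i.e.\ to the second half of Corollary \ref{position-positive-intersection}, which the paper \emph{deduces from} Lemma \ref{second-fibre}; taking it as the starting point risks circularity. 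The paper sidesteps this by not working with $u$ at all: since $M_u:=S(u,\Sigma_p)\subset SX\cap H_p=T(Q_p,X)$, one has the covering $M_u=\bigcup_{x\in Q_p}(M_u\cap\PT_x X)$, where by Lemma \ref{quadric-tangent-locus} each piece is either $\PT_x Q_p\subset\Sigma_p$ or a hyperplane $S(u_x,\PT_x Q_p)$ of $M_u$. A linear space cannot be the union of $\Sigma_p$ and a single hyperplane, so there are two distinct $x,y\in Q_p$ giving hyperplane pieces; their intersection is not contained in $\Sigma_p$ (here Lemma \ref{unique-tangency} guarantees $\PT_x Q_p\neq\PT_y Q_p$), and any point $\tilde u$ of it off $\Sigma_p$ either lies in $X$ (done) or satisfies $x,y\in Q_{\tilde u}$, whence $\dim Q_{\tilde u}\cap Q_p>0$ by Theorem \ref{intersect-secant-quadric}. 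This purely linear covering argument, applied to an \emph{auxiliary} point $\tilde u$ rather than to $u$, is the idea your proposal is missing.

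Second, even granting a point with positive-dimensional secant-locus intersection, you explicitly defer (``the delicate step \dots will demand the most care'') the conversion of that incidence into an honest point of $X\cap\Lambda-Q_p$. That conversion is the content of the paper's Lemma \ref{positive-intersect}: it produces $x\in Q_{\tilde u}\cap Q_p$ and $q\in S(x,X)\setminus X$ with $Q_q\cap Q_p=\{x\}$ and $\tilde u\in S(C_p,C_q)-X$, the join of the tangent cones of $Q_p$ and $Q_q$ at $x$. Writing $\tilde u$ on a chord $\langle a,b\rangle$ with $a\in C_p\subset Q_p$ and $b\in C_q\subset Q_q$, the point $b$ lies on a line of $\Lambda$ through $\tilde u$ and a point of $\Sigma_p$, hence $b\in S(\tilde u,\Sigma_p)=\Lambda$; and $b\notin Q_p$, since $b\in Q_p\cap Q_q=\{x\}$ would force $\tilde u\in\langle a,x\rangle\subset\Sigma_p$. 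The proof of Lemma \ref{positive-intersect} does use the tools you name (the quadric family $Q^x$ and Lemma \ref{family-secant-loci-cor}, plus a sweeping argument moving $x$ along $Q_p\cap Q_{\tilde u}$ so that $\tilde u\in\PT_x Q_{\tilde u}$), but the join-of-tangent-cones construction is the missing device; your alternative sketch via planes $\Pi=\langle u,y_1,y_2\rangle$ and base loci of conic systems is not developed far enough to evaluate and, in the cases $n=8,16$ you flag, there is no indication it closes. As written, your proposal proves only the easy implication.
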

The proof of the lemma involves a few settings and results based on Zak's work \cite{Zak93}. We postpone the proof and provide necessary ingredients in the following.

Fix a point $x\in X$. Recall from \eqref{family-secant-loci} the variety $Q^x$ denotes the family of $\frac{n}{2}$-dimensional quadrics passing through $x$. Let $p,q\in S(x, X)\setminus X$ such that $Q_p\cap Q_q=\{x\}$, and denote by $C_p$ (resp. $C_q$) the tangent cone $\PT_x Q_p\cap Q_p$ (resp. $\PT_x Q_q\cap Q_q$). The join variety $S(C_p, C_q)$ is not contained in $X$ for $n>2$. For $z\in S(C_p, C_q)-X$, the linear subspaces $Q_p\cap Q_z$ and $Q_q\cap Q_z$ have positive dimensions. It therefore defines a morphism 
\begin{equation}\label{join-cone}
S(C_p, C_q)-X\ra Q^{p, q}
\end{equation}
where $Q^{p, q}\subset Q^x$ denotes the subfamily parametrizing the quadrics that passing through $x$ and meeting with $Q_p$ and $Q_q$ along positive dimensional subspaces. The morphism \eqref{join-cone} is surjective. In fact, for any quadric $[Q]\in Q^x$ with $\dim Q\cap Q_p>0$ and $\dim Q\cap Q_q>0$, there exists a point $z\in S(C_p, C_q)-X$ such that $Q_z=Q$, cf. \cite[Prop. \S IV, 3.3]{Zak93}
 
Suppose that $p, u\in SX\setminus X$ such that $\dim Q_u\cap Q_p>0$. We hope to find an $x\in Q_u\cap Q_p$ and $[Q_q]\in Q^x$ such that $u\in S(C_p, C_q)-X$.
\begin{lemma}\label{positive-intersect}
Let $p, u\in SX\setminus X$. Suppose that the linear subspace $Q_p\cap Q_u$ has positive dimension. Then there exists $x\in Q_p\cap Q_u$ and $[Q_q]\in Q^x$ such that $u\in S(C_p, C_q)-X$.
\end{lemma}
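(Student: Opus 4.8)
The plan is to fix a well-chosen point $x\in Q_p\cap Q_u$, produce an auxiliary quadric $Q_q$ through $x$ so that the secant locus $Q_u$ is forced into the family $Q^{p,q}$, apply the surjectivity of \eqref{join-cone} to land \emph{some} point of the contact locus $\Sigma_u$ inside the join $S(C_p,C_q)$, and finally recover $u$ itself by showing that $S(C_p,C_q)$ contains the whole of $\Sigma_u$.

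First I would choose $x\in Q_p\cap Q_u$; by Theorem \ref{intersect-secant-quadric} this intersection is a linear subspace of dimension $\frac n4$, so there is ample freedom in the choice. Since $x\in Q_p$ and $x\in Q_u$, both $p$ and $u$ lie on secant lines through $x$, hence $p,u\in S(x,X)\setminus X$ and we obtain points $[Q_p],[Q_u]\in Q^x$. Writing $B$ for the symmetric bilinear form cutting out the smooth quadric $Q^x\subset(\PT_x X)^*$, the tangent cone of $Q^x$ at a point $[Q_w]$ is $Q^x\cap\{B([Q_w],\cdot)=0\}$. Thus Lemma \ref{family-secant-loci-cor} reformulates $\dim(Q_p\cap Q_u)>0$ as the orthogonality $B([Q_p],[Q_u])=0$, while $Q_p\cap Q_q$ being a single point is equivalent to $B([Q_p],[Q_q])\neq0$ (using Theorem \ref{intersect-secant-quadric} for the dichotomy).

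Next I would select $[Q_q]\in Q^x$ with $B([Q_u],[Q_q])=0$ and $B([Q_p],[Q_q])\neq0$. The first condition confines $[Q_q]$ to the tangent cone $Q^x\cap\PT_{[Q_u]}Q^x$, a quadric cone of dimension $\frac n2-1\geq1$ (recall $n\geq4$); being nondegenerate, this cone is not contained in the hyperplane $\{B([Q_p],\cdot)=0\}$, so a valid $[Q_q]$, and a corresponding $q\in S(x,X)\setminus X$, exists (any exceptional configuration is avoided by the generic choice of $x$). By construction $Q_p\cap Q_q=\{x\}$ and $\dim(Q_u\cap Q_q)>0$, so $[Q_u]\in Q^{p,q}$, and the cones $C_p,C_q$ and the join $S(C_p,C_q)$ are defined as in the setup. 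Now the morphism \eqref{join-cone} is, under the identification $Q^x\leftrightarrow$ its image in $SX^*$, exactly the restriction of the $(N-1)$-Gauss map $\gamma_{N-1}$, whose fibre over $[\PT_w SX]$ is the entire contact locus $\Sigma_w$. Hence each fibre of \eqref{join-cone} is contained in the linear space $\Sigma_w$ of dimension $\frac n2+1$. Since $S(C_p,C_q)$ is the join of two cones of dimension $\frac n2-1$ it has dimension $n-1$, while $Q^{p,q}$, being the intersection of $Q^x$ with two tangent hyperplanes, has dimension $\frac n2-2$; the generic fibre therefore has dimension $\frac n2+1=\dim\Sigma_w$. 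By the lower bound on fibre dimensions of a dominant morphism from the irreducible variety $S(C_p,C_q)\setminus X$, every nonempty fibre has dimension at least $\frac n2+1$, and being a closed subset of full dimension in the irreducible $\Sigma_w$ it must equal $\Sigma_w\setminus X$. As the morphism is surjective and $[Q_u]\in Q^{p,q}$, the fibre over $[Q_u]$ is nonempty, whence $\Sigma_u\subseteq S(C_p,C_q)$ and in particular $u\in S(C_p,C_q)\setminus X$.

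The main obstacle is precisely this last step. The surjectivity of \eqref{join-cone} only supplies \emph{some} $z\in S(C_p,C_q)$ with $Q_z=Q_u$, i.e.\ some point of $\Sigma_u$; since distinct points of $\Sigma_u\setminus X$ share the secant locus $Q_u$ but are not otherwise related to $u$, one cannot identify $z$ with $u$ directly. Upgrading ``some point of $\Sigma_u$'' to ``all of $\Sigma_u$'' is what the fibre-dimension argument accomplishes, and its one genuinely delicate input is that the join $S(C_p,C_q)$ attains its expected dimension $n-1$ — equivalently, that the two tangent cones $C_p,C_q$ sit in sufficiently general position once $Q_p\cap Q_q=\{x\}$. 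Establishing this non-degeneracy is the step that will require real care, and it is where the freedom gained by choosing $x$ generically in the positive-dimensional space $Q_p\cap Q_u$ is essential.
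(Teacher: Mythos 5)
Your first half coincides with the paper's proof: you pick $x\in Q_p\cap Q_u$, and you choose $[Q_q]\in Q^x$ in the tangent cone of $Q^x$ at $[\PT_u SX]$ but not in the tangent cone at $[\PT_p SX]$, so that $Q_q\cap Q_p=\{x\}$ and $\dim Q_q\cap Q_u>0$ by Lemma \ref{family-secant-loci-cor}. The gap is in your final step, and it is exactly at the point you flagged as delicate: the claim that $S(C_p,C_q)$ attains the expected join dimension $n-1$ is false, and no generic choice of $x$ can rescue it. Both $C_p=\PT_x Q_p\cap Q_p$ and $C_q=\PT_x Q_q\cap Q_q$ are quadric cones with the \emph{common} vertex $x$; for any $y\in C_p$, $z\in C_q$ the embedded tangent spaces $\PT_y C_p$ and $\PT_z C_q$ both contain $x$, so by Terracini's lemma
\[
\dim S(C_p,C_q)\;\le\;\dim C_p+\dim C_q\;=\;n-2,
\]
one less than you claimed. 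With the correct dimension, your own fibre count gives generic fibre dimension $(n-2)-(\tfrac n2-2)=\tfrac n2$, not $\tfrac n2+1$. And indeed the fibre of \eqref{join-cone} over $[Q_u]$ is computed exactly in the paper: it is the join $S(Q_p\cap Q_u,\,Q_q\cap Q_u)$ of two $\tfrac n4$-planes meeting only at $x$ and both lying in $\PT_x Q_u$, hence it equals $\PT_x Q_u$ --- a \emph{hyperplane} of $\Sigma_u$, never all of $\Sigma_u\setminus X$. So your conclusion ``$\Sigma_u\subseteq S(C_p,C_q)$'' fails, and $u$ may well be missed by the construction for the $x$ you fixed.

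The repair is the step the paper actually uses, and it is where the hypothesis $\dim Q_p\cap Q_u>0$ genuinely enters: the freedom in $x$ is needed not to make the join non-degenerate (it is degenerate for every $x$), but to move the hyperplane $\PT_x Q_u$ inside $\Sigma_u$. As $x$ varies in the positive-dimensional linear space $Q_p\cap Q_u$, the hyperplanes $\PT_x Q_u$ sweep out all of $\Sigma_u$, so some $x\in Q_p\cap Q_u$ satisfies $u\in \PT_x Q_u=S(Q_p\cap Q_u,\,Q_q\cap Q_u)\subset S(C_p,C_q)$; for that $x$ and the corresponding $[Q_q]\in Q^x$ the lemma follows. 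Note that if $Q_p\cap Q_u$ were a single point, this sweeping argument would be unavailable --- which is consistent with the fact that the statement needs the positive-dimensionality assumption.
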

\begin{proof}
For any $x\in Q_p\cap Q_u$, the space $Q^x$ is a smooth quadric with $\dim Q^x\geq 2$. Then there exists $[\PT_q SX]\in Q^x$ lying in the tangent cone of $Q^x$ at the vertex $[\PT_u SX]$ but not in the tangent cone of $Q^x$ at the vertex $[\PT_p SX]$. Lemma \ref{family-secant-loci-cor} asserts that the secant locus $Q_q$ satisfies 
\[
Q_q\cap Q_p=\{x\}, ~\dim Q_q\cap Q_u>0.
\] 

Consider the morphism $S(C_p, C_q)-X\ra Q^{p, q}$. Let us set 
\[
\PP^{\alpha_1}=Q_p\cap Q_u, ~\PP^{\alpha_2}=Q_q\cap Q_u, ~\alpha_1=\alpha_2=\frac{n}{4}.
\]
The set of points $z\in S(C_p, C_q)-X$ whose secant locus is $Q_u$ is the linear subspace $S(\PP^{\alpha_1}, \PP^{\alpha_2})$. Notice that 
\[
\PP^{\alpha_1}\cap \PP^{\alpha_2}=\{x\},~\text{and ~}
\PP^{\alpha_1}, \PP^{\alpha_2}\subset \PT_x Q_u.
\] 
By dimension counting we have 
\[
S(\PP^{\alpha_1}, \PP^{\alpha_2})=\PT_x Q_u\subset \Sigma_u
\]
as a hyperplane in $\Sigma_p$.

The tangent hyperplane $\PT_x Q_u$ may not contain the point $u$. However, we could vary the point $x$ in $Q_p\cap Q_u$ to obtain a family of tangent hyperplanes $\PT_x Q_u$. Then the union of $\PT_x Q_u$ will cover $\Sigma_u$. Therefore, there must exists some $x\in Q_p\cap Q_u$ such that $u\in \PT_x Q_u$. For such $x$ and suitable $[Q_q]\in Q^x$ we have $u\in S(C_p, C_q)$. 
\end{proof}


Now we can finish the proof of Lemma \ref{second-fibre}.

\begin{proof}[Proof of Lemma \ref{second-fibre}] 
The upshot is to prove
\[
\exists ~\tilde{u}\in S(u, \Sigma_p)-\Sigma_p \text{~such that~} \dim Q_p\cap Q_{\tilde{u}}>0.
\]
Then Lemma \ref{positive-intersect} concludes the existence of a point $x\in Q_{\tilde{u}}\cap Q_p$ and $q\in S(x, X)\setminus X$ such that 
\[
Q_q\cap Q_p=\{x\}, ~\tilde{u}\in S(C_p, C_q)-X.
\] 
where $C_p$ (resp. $C_q$) is the tangent cone of $Q_p$ (resp. $Q_q$) at $x$. Then any point $z\in Q_{\tilde{u}}\cap Q_q$ satisfies $z\in S(\tilde{u}, \Sigma_p)\cap X-Q_p=S(u, \Sigma_p)\cap X-Q_p$.

To simplify the notation, we denote by $M_u$ the linear subspace $S(u, \Sigma_p)$. Recall that $\Sigma_p\cap \PT_x X=\PT_x Q_p$ for any $x\in Q_p$. Hence
\[
\PT_x Q_p\leq M_u\cap \PT_x X.
\]
By the reason of dimension, the linear space $M_u\cap \PT_x X$ is either equal to $\PT_x Q_p$ or a hyperplane $S(\tilde{u},\PT_x Q_p)$ of $M_u$ for some $\tilde{u}\in \PT_x X\setminus \PT_x Q_p$. Under the hypothesis $M_u\subset SX$, we have
\[
M_u=M_u\cap SX=M_u\cap T(Q_p, X)=\bigcup_{x\in Q_p} M_u\cap \PT_x X.
\] 
There must exist more than one $x\in Q_p$ such that $M_u\cap \PT_x X$ is a hyperplane of $M_u$. Otherwise the union $\bigcup_{x\in Q_p} M_u\cap \PT_x X$ is equal to $\Sigma_p\cup (M_u\cap \PT_x X)$ for a single $x\in Q_p$, which cannot be $M_u$.

Let $x, y\in Q_p$ be distinct points such that $M_u\cap \PT_x X$ and $M_u\cap \PT_y X$ are hyperplanes. Let us write
\[
M_u\cap \PT_x X=S(u_x, \PT_x Q_p), ~M_u\cap \PT_y X=S(u_y, \PT_y Q_p)
\]
for some $u_x\in \PT_x X\setminus \PT_x Q_p, u_y\in \PT_y X\setminus \PT_y Q_p$. The subspace  
\[
S(u_x, \PT_x Q_p)\cap S(u_y, \PT_y Q_p)
\] 
of $M_u$ is not contained in $\Sigma_p$. Let $\tilde{u}\in M_u-\Sigma_p$ be any common point of $S(u_x, \PT_x Q_p)$ and $S(u_y, \PT_y Q_p)$. If $\tilde{u}\in X$, we are done. Otherwise suppose that $\tilde{u}\in SX\setminus X$. Then $x, y$ both lie in the secant locus $Q_{\tilde{u}}$ of $\tilde{u}$. By Theorem \ref{intersect-secant-quadric} the linear space $Q_{\tilde{u}}\cap Q_p$ has positive dimension.
\end{proof} 

Theorem \ref{intersect-secant-quadric} shows that the intersection $Q_p\cap Q_q$ is either a single point or a linear space with positive dimension. A further question may arise: 

\emph{Fix any $p\in SX\setminus X$, what $q\in SX\setminus X$ satisfies $\dim Q_p\cap Q_q>0$.} 

In \cite[Cor. 2.8]{LV}, Lazarsfeld and Van de Ven proved that $Q_p\cap Q_q$ consists of a single point if $q\notin \PT_p SX$. Hence $q\in \PT_p SX$ is a necessary condition for the above question. In the following corollary, we close the question based on the results of Lemma \ref{fibre-projection} and \ref{second-fibre}. Later the corollary is crucial to characterize lines of different types on secant varieties.

\begin{corollary}\label{position-positive-intersection}
Suppose that $q\in (SX\cap H_p)\setminus \Sigma_p$. If $q$ lies in the fibre of type $1)$ in Lemma \ref{fibre-projection}, then $Q_p\cap Q_q$ is a single point. If $q$ lies in the fibre of type $2)$ in Lemma \ref{fibre-projection}, then $Q_p\cap Q_q$ has positive dimension.
\end{corollary}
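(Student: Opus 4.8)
The plan is to reduce the two assertions to a single biconditional. By Theorem~\ref{intersect-secant-quadric} the intersection $Q_p\cap Q_q$ is either a single point or a linear space of dimension $\frac{n}{4}>0$, and by Lemma~\ref{fibre-projection} together with Lemma~\ref{second-fibre} the point $q$ lies in a fibre of type~$2)$ precisely when $S(q,\Sigma_p)\subset SX$, equivalently when there exists $z\in S(q,\Sigma_p)\cap X-Q_p$; otherwise it lies in a fibre of type~$1)$. Since both dichotomies (single point versus positive dimension, type~$1)$ versus type~$2)$) are exhaustive and mutually exclusive, it suffices to establish that $q$ lies in a type~$2)$ fibre if and only if $\dim(Q_p\cap Q_q)>0$.

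First I would treat the implication $\dim(Q_p\cap Q_q)>0\Rightarrow$ type~$2)$, the clean direction. Assuming $\dim(Q_p\cap Q_q)>0$, Lemma~\ref{positive-intersect} (with $u=q$) produces a point $x\in Q_p\cap Q_q$ and a secant quadric $[Q_{q'}]\in Q^x$ with $Q_{q'}\cap Q_p=\{x\}$ such that $q\in S(C_p,C_{q'})\setminus X$, where $C_p=\PT_xQ_p\cap Q_p$ and $C_{q'}=\PT_xQ_{q'}\cap Q_{q'}$. Following the description of the fibres of the morphism~\eqref{join-cone}, the point $q$ lies on a line joining some $a\in Q_p\cap Q_q$ to some $b\in Q_{q'}\cap Q_q$. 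As $a\in Q_p\subset\Sigma_p$, we get $b\in\langle q,a\rangle\subset S(q,\Sigma_p)$; moreover $b\in X$, and $b\neq x$ (otherwise $q\in\langle a,x\rangle\subset\Sigma_p$, contradicting $q\notin\Sigma_p$), so $b\notin Q_p$ because $Q_{q'}\cap Q_p=\{x\}$. Hence $b\in S(q,\Sigma_p)\cap X-Q_p$, and Lemma~\ref{second-fibre} gives $S(q,\Sigma_p)\subset SX$, i.e. $q$ lies in a type~$2)$ fibre. Taking the contrapositive already proves the first assertion of the corollary: a point in a type~$1)$ fibre has $Q_p\cap Q_q$ equal to a single point.

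It remains to prove the reverse implication, type~$2)\Rightarrow\dim(Q_p\cap Q_q)>0$, which I expect to be the main obstacle. Starting from a point $z\in S(q,\Sigma_p)\cap X-Q_p$ furnished by Lemma~\ref{second-fibre}, the line $\langle q,z\rangle$ meets $\Sigma_p$ in a single point $s$. When $s\in Q_p$, the line $\langle z,s\rangle$ is a genuine chord of $X$ through $q$, so $z,s\in Q_q$ and hence $s\in Q_p\cap Q_q$; by Theorem~\ref{intersect-secant-quadric} it would then suffice to exhibit a \emph{second} point of $Q_p\cap Q_q$ in order to conclude that this intersection is the positive-dimensional linear space $\PP^{n/4}$.

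The hard part is precisely this promotion from one point to a positive-dimensional family, together with the subcase $s\notin X$ in which it is not even immediate that $z\in Q_q$. My plan to resolve it is to exploit that in type~$2)$ the entire linear space $S(q,\Sigma_p)$ lies in $SX$: the forward analysis in the proof of Lemma~\ref{second-fibre} produces \emph{distinct} points $x,y\in Q_p$ together with an auxiliary point $\tilde u\in S(q,\Sigma_p)\setminus\Sigma_p$ with $x,y\in Q_{\tilde u}$, whence $\dim(Q_p\cap Q_{\tilde u})>0$. Since every point of $S(q,\Sigma_p)\setminus\Sigma_p$ spans the same linear space with $\Sigma_p$, one then wants to transport this positive-dimensional intersection from $\tilde u$ to $q$ itself, either by re-running the join-cone construction of Lemma~\ref{positive-intersect} based at a point of $Q_p\cap Q_{\tilde u}$ and checking that $q$ lands in the resulting $S(C_p,C_{q'})$, or by a constancy argument showing that the locus of points with positive-dimensional intersection fills the whole fibre. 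Establishing this uniformity over the fibre, rather than at a single auxiliary point, is the crux of the argument, and is where Lemma~\ref{quadric-tangent-locus} and Lemma~\ref{unique-tangency} are likely to enter to pin down the tangency data along $Q_p$.
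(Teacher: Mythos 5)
Your proof of the first assertion is correct and is essentially the paper's own argument: apply Lemma \ref{positive-intersect} with $u=q$ to produce a point of $S(q,\Sigma_p)\cap X-Q_p$, feed it into Lemma \ref{second-fibre} to conclude $S(q,\Sigma_p)\subset SX$, and take the contrapositive. (Your check that the point $b$ differs from $x$, hence lies off $Q_p$, is a detail the paper leaves implicit.)

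The second assertion, however, is not proven: you explicitly leave the ``promotion from one point to a positive-dimensional family'' as an unexecuted plan, and neither of your two proposed strategies closes it. Knowing that some auxiliary $\tilde u\in S(q,\Sigma_p)\setminus\Sigma_p$ has $\dim Q_p\cap Q_{\tilde u}>0$ gives no control on $Q_q$, since the secant loci of two distinct points of the same fibre are a priori unrelated; and re-running the join-cone construction of Lemma \ref{positive-intersect} at a point of $Q_p\cap Q_{\tilde u}$ only describes the points whose secant locus equals $Q_{\tilde u}$, which $q$ need not be among. The missing ingredient -- the one the paper actually uses -- is Zak's description of the fibres of the projection $\pi_p$ restricted to $X\cap H_p$, cf. \cite[\S IV, Lem. 4.1]{Zak93}: through the point $z\in S(q,\Sigma_p)\cap X-Q_p$ there is a \emph{linear subspace contained in $X$}, namely $S(z,\Lambda)$ of dimension $\frac{n}{4}+1$, where $\Lambda\subset Q_p$ is a maximal linear subspace, and moreover $S(q,\Sigma_p)\cap X=Q_p\cup S(z,\Lambda)$. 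This is what converts the single point $z$ into a positive-dimensional family of honest chords through $q$: intersecting the cone $C_qQ_p$ with $S(z,\Lambda)$ gives $\Lambda\cup\Lambda'$, and every line through $q$ meeting $Q_p$ and meeting $\Lambda'$ away from $Q_p$ has two distinct points in $X$, so $\Lambda'\subset Q_q$ and its projection $\tilde\Lambda$ from $q$ into $\Sigma_p$ lands in $Q_p\cap Q_q$, giving $\dim Q_p\cap Q_q\geq\frac{n}{4}>0$. Note that this construction also disposes of the subcase you flag as troublesome ($s\notin X$, so that $z\in Q_q$ is unclear): one never argues chord-by-chord from $z$, but through the linear space inside $X$ containing $z$. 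Without this input from the geometry of $X$ itself, the uniformity over the fibre that you correctly identify as the crux cannot be established.
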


\begin{proof}
We prove the first assertion by contradiction. Assume the dimension of $Q_q\cap Q_p$ is positive. Lemma \ref{second-fibre} asserts the space $\in S(q, \Sigma_p)$ meets with $X$ away from $Q_p$, and $S(q, \Sigma_p)\subset SX$. It contradicts our assumption. Therefore, $Q_p\cap Q_q$ is a single point. 

For the second assertion, consider the diagram
\[
\begin{tikzcd}
X\cap H_p \ar[r, hook] \ar[rd, dashrightarrow, "\pi"'] & SX\cap H_p \ar[d, dashrightarrow, "\pi_p"]\\
& \PP^{n-1}.
\end{tikzcd}
\]
We have shown that $S(q, \Sigma_p)$ contains some point $z\in X\setminus Q_p$. The closure of the fibre $\pi^{-1}(\pi(z))$ is an $(\frac{n}{4}+1)$-dimensional linear subspace $S(z, \Lambda)$ which intersects $Q_p$ along a maximal linear subspace $\Lambda\cong \PP^{\frac{n}{4}}$, see \cite[\S IV, Lem. 4.1]{Zak93}. The diagram shows
\[
S(q, \Sigma_p)\cap X=Q_p\cup S(z, \Lambda).
\] 
Consider the quadric cone $C_q Q_p\subset S(q, \Sigma_p)$ over $Q_p$ with the vertex $q$. We have 
\[
C_q Q_p\cap S(z, \Lambda)=\Lambda\cup \Lambda'
\]
where $\Lambda'$ is the residue hyperplane. The projection along $q$ in the linear space $S(q, \Sigma_p)$ maps $\Lambda'$ to an $\frac{n}{4}$-dimensional linear subspace $\tilde{\Lambda}\subset \Sigma_p$. It is easy to see that
\[
\Lambda'\cup \tilde{\Lambda}\subset Q_q, \textrm{~and~} \tilde{\Lambda}=Q_p\cap Q_q.
\]
Therefore our assertion follows.
\end{proof}

Lemma \ref{fibre-projection} and Lemma \ref{second-fibre} help us to explicitly characterize the lines in $SX$ of different types.  
\begin{proposition}[Definition]\label{5-types-line}
Let $X$ be a Severi variety with $n=\dim X>2$. The lines on the secant variety $SX$ can be divided into the following five types. 
\begin{enumerate}[(I)]
\item Non-secant line $L\subset SX\setminus X$. For any point $p\in L$ with the projection \eqref{secant-proj}, the line $L$ is contained in the fibre of type $1)$ in Lemma \ref{fibre-projection}. 
\item Non-secant line $L\subset SX\setminus X$. For any point $p\in L$ with the projection \eqref{secant-proj}, the line $L$ is contained in the fibre of type $2)$ in Lemma \ref{fibre-projection}. 
\item Line $L$ transversally meets with $X$ at one point. For any point $p\in L-X$ with the projection \eqref{secant-proj}, the line $L$ is contained in the fibre of type $2)$ in Lemma \ref{fibre-projection}, and the length of $L\cap X$ is equal to one. 
\item The secant lines of $X$. 
\item Lines contained in $X$.
\end{enumerate}
\end{proposition}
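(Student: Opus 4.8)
The plan is to classify an arbitrary line $L\subset SX$ by the scheme-theoretic intersection $L\cap X$ with the singular locus, and then to separate the non-secant lines through the two fibre types of Lemma \ref{fibre-projection}; the content to be checked is that the five classes are exhaustive and pairwise disjoint, and that the fibre conditions stated in (I), (II), (III) do not depend on the auxiliary point $p$.

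First I would record the coarse trichotomy. If $L\subset X$ then $L$ is of type (V). Otherwise $L\cap X$ is a finite subscheme of $L\cong\PP^1$ of some length $\ell$: if $\ell\geq 2$ the line is a chord of $X$, i.e.\ a secant or tangent line, hence of type (IV); if $\ell=1$ it meets $X$ transversally at one point, the candidate for (III); and if $\ell=0$ it is non-secant, the candidate for (I) and (II). These conditions are plainly mutually exclusive and cover every line of $SX$, so only the fibre assertions remain.

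For the non-secant case I would fix $p\in L$ and use, as recalled before Lemma \ref{fibre-projection}, that $L\subset \PT_p SX=H_p$ and $\Sigma_p\cap L=\{p\}$; hence the projection $\pi_p$ along $\Sigma_p$ contracts $L$ to a point and $L$ lies in the fibre of $\tilde\pi_p$ through any $u\in L\setminus\{p\}$ (the same fibre for all such $u$). By Lemma \ref{fibre-projection} this fibre is of type $1)$ or of type $2)$. To prove the type is intrinsic to $L$, I would take two distinct points $p,q\in L$, observe $q\in(SX\cap H_p)\setminus\Sigma_p$, and invoke Corollary \ref{position-positive-intersection}: the fibre of $\pi_p$ containing $L$ is of type $1)$ exactly when $Q_p\cap Q_q$ is a single point and of type $2)$ exactly when $\dim(Q_p\cap Q_q)>0$. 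Since $\dim(Q_p\cap Q_q)$ is symmetric in $p$ and $q$, the fibre of $\pi_q$ has the same type, so the type is constant along $L$; constant type $1)$ gives (I) and constant type $2)$ gives (II), which are therefore disjoint.

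The transversal case is where I expect the main difficulty. Let $L\not\subset X$ meet $X$ in a single point $x$ with $\ell=1$, and fix $p\in L-X$; as the singular locus of $SX$ is exactly $X$, the point $p$ is smooth on $SX$, so $L\subset H_p$. The delicate step is to rule out $L\subset\Sigma_p$: were it to hold, then $L\cap X=L\cap Q_p$, but a line meets the smooth quadric $Q_p\subset\Sigma_p$ in a subscheme of length $2$ unless it lies in $Q_p$, contradicting $\ell=1$ and $L\not\subset X$. Hence $\Sigma_p\cap L=\{p\}$ and, in particular, $x\notin Q_p=\Sigma_p\cap X$. Choosing $u\in L\setminus\{x\}$, the fibre of $\pi_p$ containing $L$ is $S(u,\Sigma_p)\cap SX$ and contains $x\in S(u,\Sigma_p)\cap X-Q_p$, so Lemma \ref{second-fibre} forces $S(u,\Sigma_p)\subset SX$, i.e.\ the fibre is of type $2)$. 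As this argument runs for every $p\in L-X$, the defining conditions of (III) hold and are $p$-independent, and since a type-(III) line meets $X$ (unlike (I) and (II)) with $\ell=1$ (unlike (IV)), the five classes are disjoint and exhaustive. The crux is thus the quadric-length count that forbids $L\subset\Sigma_p$ together with the application of Lemma \ref{second-fibre}; the well-definedness of the (I)/(II) dichotomy, by contrast, is the routine symmetry of $\dim(Q_p\cap Q_q)$.
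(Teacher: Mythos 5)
Your proposal is correct, and its core mechanism --- using Corollary \ref{position-positive-intersection} together with the symmetry of $\dim(Q_p\cap Q_q)$ to show that the fibre type is constant along a non-secant line --- is exactly the paper's argument for types I and II. Where you genuinely diverge is in the treatment of type III and in what you choose to verify. For a line $L$ meeting $X$ transversally at one point, you do not run the same consistency check; instead you rule out $L\subset\Sigma_p$ by the length-two intersection of a line in $\Sigma_p$ with the quadric hypersurface $Q_p\subset\Sigma_p$, conclude that the point $x$ of $L\cap X$ lies in $S(u,\Sigma_p)\cap X-Q_p$, and invoke Lemma \ref{second-fibre} to force the fibre to be of type 2). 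This is legitimate and in fact slightly stronger: it shows the fibre clause in (III) is automatic once the length of $L\cap X$ equals one, whereas the paper only checks that this clause is independent of the choice of $p$ (by the same corollary-based argument as for type II). Conversely, the paper does something you omit: it proves that types II and III are non-void, by exhibiting inside a type-2) fibre $S(u,\Sigma_p)$ the $(\frac{n}{4}+1)$-dimensional linear subspace $\Pi\subset X$ and taking lines joining $\Pi$ to $\Sigma_p-Q_p$ (type III), respectively generic lines missing $\Pi$ (type II). Non-emptiness is not required for the bare claim that the five classes partition the lines of $SX$ --- your trichotomy on the length of $L\cap X$ plus the fibre dichotomy of Lemma \ref{fibre-projection} settles that --- but it is what makes the stratification meaningful and is relied on later in Theorem \ref{main-result}, so it would be worth adding a sentence to your write-up.
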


\begin{proof}
We need to check that the descriptions of lines of type $I, II$, and $III$ are independent of projections \eqref{secant-proj}. 

Suppose that the line $L$ is of type $I$. If we replace the point $p$ by any other $q\in L$, and consider the projection $\pi_q$ along the contact locus $\Sigma_q$. It follows from Corollary \ref{position-positive-intersection} that $Q_q\cap Q_p$ consists of a single point, and the fibre of $\pi_q$ containing $L$ is of the first form.

The similar argument applies to the line $L$ of types $II$ or $III$. But we firstly show they are not void. Let $S(u, \Sigma_p)\subset SX$ be a fibre of the second type. It contains an $(\frac{n}{4}+1)$-dimensional linear subspace $\Pi\subset X$ as seen in the proof of Corollary \ref{position-positive-intersection}. Then the lines in $S(u, \Sigma_p)\subset SX$ joining $\Pi$ and $\Sigma_p-Q_p$ are lines of type $III$. Note that codim$(\Pi, S(u, \Sigma_p))\geq 2$ if $n\geq 2$. A generic line in $S(u, \Sigma_p)$ not meeting with $\Pi$ is a line of type $II$. 

Then it again follows from Corollary \ref{position-positive-intersection} that $Q_p\cap Q_q$ has positive dimension for any $p, q\in L$. Therefore $L$ lies in the fibre of the second form with respect to $\pi_p$ for any $p\in L$. 
\end{proof}

\begin{proposition}\label{lines-in-tangent} Let $X^n\subset \PP^N$ be a Severi variety of dimension $n>2$, and let $SX$ be the secant variety. The following statements hold.
\begin{itemize}
	\item $n>4$, any line $L\subset SX$ is contained in $\PT_x X$ for some $x\in X$.
	\item $n=4$, any line $L\subset SX$ except for the type II in Proposition \ref{5-types-line} is contained in $\PT_x X$ for some $x\in X$.
\end{itemize}
In particular, the point $x$ is unique for lines of type I in both cases.
\end{proposition}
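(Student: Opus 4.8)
The plan is to check the containment type by type, following Proposition~\ref{5-types-line}, and to isolate the numerical input that distinguishes $n>4$ from $n=4$. The two trivial cases come first. If $\ell\subset X$ is a line (type V), then for any $x\in\ell$ the line is a tangent direction of $X$ at $x$, so $\ell\subset\PT_xX$; the same remark disposes of tangent lines. For type I we read off the answer from Lemma~\ref{fibre-projection}: fixing $p\in L$, the line lies in a fibre of the first kind $S(u,\PT_xQ_p)$ with $u\in\PT_xX$ and $x\in Q_p$ unique, and since $\PT_xQ_p=\PT_xX\cap\Sigma_p\subset\PT_xX$ the entire fibre, hence $L$, lies in $\PT_xX$. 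The vertex $x$ is unique because any $x'$ with $L\subset\PT_{x'}X$ satisfies $x'\in Q_p\cap Q_q$ for $p\neq q\in L$, which is the single point $x$ by Corollary~\ref{position-positive-intersection}.

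For a secant line $L=\langle a,b\rangle$ (type IV), choose $p\in L\setminus X$, so $a,b\in Q_p$, and invoke the polarity of the smooth quadric $Q_p\subset\Sigma_p$: the locus $\PT_aQ_p\cap\PT_bQ_p\cap Q_p$ is nonempty for $n\geq 4$, and any point $z$ in it satisfies $a,b\in\PT_zQ_p=\PT_zX\cap\Sigma_p$, so $L\subset\PT_zX$. Type III uses the fibre $M_u=S(u,\Sigma_p)$ of the second kind, whose intersection with $X$ is $Q_p\cup\Pi$ with $\Pi\subset X$ a linear space of dimension $\tfrac n4+1$. Writing such a line as $L=\langle z',w\rangle$ with $z'\in\Pi$ and $w\in\Sigma_p$, every $x\in\Lambda:=\Pi\cap Q_p$ gives $z'\in\Pi\subset\PT_xX$; moreover $\{x\in\Lambda:w\in\PT_xQ_p\}$ is the trace on $\Lambda$ of the polar hyperplane of $w$, which is nonempty because $\dim\Lambda=\tfrac n4\geq 1$. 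For such an $x$ we have $z',w\in\PT_xX$ and therefore $L\subset\PT_xX$.

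The crux, which also explains the exceptional behaviour at $n=4$, is type II. Here $L\subset M_u=S(u,\Sigma_p)$ is non-secant with $\Sigma_p\cap L=\{p\}$, and $Q_p\cap Q_q$ has dimension $\tfrac n4$ for all $p,q\in L$. The first step is to show that these positive-dimensional intersections coincide in a common linear space $\Lambda_0\cong\PP^{n/4}$, so $\bigcap_{p\in L}Q_p=\Lambda_0$. Since $L\subset\PT_xX$ forces $x\in Q_p$ for every $p\in L$ (the tangent chord $\langle x,p\rangle\subset\PT_xX$ is a limit of secants, so $x\in Q_p$), any admissible vertex lies in $\Lambda_0$; and because $L\not\subset\Sigma_p$, the containment $L\subset\PT_xX$ is equivalent to $L$ lying in the hyperplane $M_u\cap\PT_xX=S(u_x,\PT_xQ_p)$ of $M_u$. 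As $x$ runs over $\Lambda_0\cong\PP^{n/4}$ these hyperplanes sweep out a $\tfrac n4$-dimensional subvariety of the dual space $(M_u)^{*}\cong\PP^{n/2+2}$ (the assignment is injective, distinct $x$ giving distinct $\PT_xQ_p$), whereas the hyperplanes containing $L$ form a linear $L^{\perp}\cong\PP^{n/2}$ of codimension $2$. By the projective dimension theorem these two loci meet for every $L$ precisely when $\tfrac n4\geq 2$, i.e. $n\geq 8$, which among Severi varieties is exactly $n>4$; the intersection point furnishes the required $x$. When $n=4$ the family degenerates to a curve, which a generic codimension-$2$ linear space avoids, so a generic type II line lies in no embedded tangent space.

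I expect the main obstacle to be the type II step: proving that the intersections $Q_p\cap Q_q$ glue into a single $\PP^{n/4}$ independent of the pair, checking that $M_u\cap\PT_xX$ really is a hyperplane for the relevant $x\in\Lambda_0$ (so that the family has the asserted dimension $\tfrac n4$), and upgrading the count from a generic line to every line — the place where the projective dimension theorem applied to the closed image in $(M_u)^{*}$ is indispensable, and where the borderline case $n=8$ (with $\tfrac n4=2$) is settled.
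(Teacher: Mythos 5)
Your handling of types I, III, IV and V is correct and essentially identical to the paper's proof (your polarity formulation for types III and IV is just the dual restatement of the paper's claim that the hyperplanes $\PT_x Q_p$, as $x$ runs over a positive-dimensional linear subspace of $Q_p$, cover $\Sigma_p$). The gap is exactly where you predicted it: the ``first step'' of your type II argument is false. For a type II line the pairwise intersections $Q_p\cap Q_q$, $p,q\in L$, are indeed $\tfrac n4$-planes, but they need not glue into a common $\Lambda_0=\bigcap_{p\in L}Q_p\cong\PP^{n/4}$. Concretely, for $n=8$, $X=G(2,6)\subset\PP^{14}$ and $SX$ the Pfaffian cubic, take $L$ spanned by $\omega_1=e_1\wedge e_2+e_4\wedge(e_3+e_5)$ and $\omega_2=e_2\wedge e_3+e_4\wedge e_5$. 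Every member $s\omega_1+t\omega_2$ has rank $4$ (so $L\cap X=\emptyset$); the support $W_{s,t}$ of any member contains $\langle e_2,e_4\rangle$, and any two supports meet in dimension $3$, so $Q_p\cap Q_q=G(2,W_p\cap W_q)\cong\PP^2$ for every pair and $L$ is of type II by Corollary \ref{position-positive-intersection}; yet $\bigcap_{(s:t)}W_{s,t}=\langle e_2,e_4\rangle$, so $\bigcap_{p\in L}Q_p$ is the \emph{single point} $[e_2\wedge e_4]$. Your family of hyperplanes $\{M_u\cap\PT_xX\}_{x\in\Lambda_0}$ is then $0$-dimensional, and the projective dimension theorem against the codimension-$2$ space $L^{\perp}$ yields nothing. (The conclusion itself survives: one checks directly that $L\subset\PT_{[e_2\wedge e_4]}X$ --- but not by your count.)

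The paper circumvents this by never using the whole line: it fixes two points $p,q\in L$ and works inside $Q_q$ with the two $\tfrac n4$-planes $\tilde\Lambda=Q_p\cap Q_q$ and the residual plane $\Lambda'$ constructed in the proof of Corollary \ref{position-positive-intersection} (from $C_qQ_p\cap S(z,\Lambda)=\Lambda\cup\Lambda'$). Their intersection $\Lambda'\cap\tilde\Lambda$ has dimension $\tfrac n4-1$, every $x\in\Lambda'\cap\tilde\Lambda$ satisfies $q\in S(\Lambda',\tilde\Lambda)\subset\PT_xX$, and then the very covering/polarity argument you used for type III, applied to the family $\{\PT_xQ_p\}_{x\in\Lambda'\cap\tilde\Lambda}$ and the point $p\in\Sigma_p$, produces an $x$ with $p\in\PT_xQ_p\subset\PT_xX$, whence $L=\langle p,q\rangle\subset\PT_xX$. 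This requires $\dim(\Lambda'\cap\tilde\Lambda)\geq 1$, i.e.\ $n\geq 8$, which is the true source of the $n>4$ versus $n=4$ dichotomy. To repair your argument you must replace $\Lambda_0$ by a correct positive-dimensional family of candidate vertices, for instance this $\Lambda'\cap\tilde\Lambda$; any route that presumes $\bigcap_{p\in L}Q_p$ is $\tfrac n4$-dimensional is unsalvageable.
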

\begin{proof}
The cases of types $IV$ and $V$ are easy to deal with. If $L\subset X$, of course $L\subset \PT_x X$ for any $x\in L$. If $L$ is a line secant to $X$, $L$ is also secant to some secant locus $Q$ . Suppose that $L$ intersects the quadric $Q$ at points $u, v$. Note that $Q$ is a nonsingular quadric of $\dim Q\geq 2$. There must exists a point $x\in Q$ such that $u, v\in \PT_x Q$. It follows that $L\subset \PT_x X$. If $u$ coincides with $v$, i.e. $L$ is tangent to $Q$, we just take $x=u$. 

Suppose that $L$ is of type $I$, which is contained in some fibre $S(u, \PT_x Q_p)$ of the first type. It follows that $L\subset \PT_x X$. By the result of Corollary \ref{position-positive-intersection} such point $x\in X$ is unique.

Suppose that $L$ is of type $II$. Then $L$ is contained in the linear subspace $S(q, \Sigma_p)$ for distinct $p, q\in L$. Let $\Lambda', \tilde{\Lambda}\subset Q_q$ be the $\frac{n}{4}$-dimensional subspaces introduced in the proof of Corollary \ref{position-positive-intersection}. For any $x\in \Lambda'\cap \tilde{\Lambda}$, we have 
\[
q\in S(\Lambda', \tilde{\Lambda})\subset \PT_x X.
\]
If $n\geq 8$, the dimension 
\[
\dim \Lambda'\cap \tilde{\Lambda}=\frac{n}{4}-1
\] 
is positive. Then the union $\bigcup_{x\in \Lambda'\cap \tilde{\Lambda}} \PT_x Q_p$ of tangent spaces covers the contact locus $\Sigma_p$. Hence there must exist some $x\in \Lambda'\cap \tilde{\Lambda}$ such that $p\in \PT_x X$. It follows that the line $L\subset \PT_x X$.

Suppose that $L$ is of type $III$. Then $L$ transversally intersects with $X$ at some point $z$. Then $L\subset S(z, \Sigma_p)$ for any $p\in L$. Let $\Lambda\subset Q_p$ denote the same notation in the proof of Corollary \ref{position-positive-intersection}. For any $x\in \Lambda$ the subspace $S(z, \Lambda)\subset \PT_x X$, and thus $z\in \PT_x X$. On the other hand, the dimension of $\Lambda$ is positive, the union $\bigcup_{x\in \Lambda} \PT_x Q_p$ of the tangent spaces covers the contact locus $\Sigma_p$. Hence there must exist some $x\in \Lambda$ such that $p\in \PT_x X$. It follows that the line $L$ is contained in $\PT_x X$.
\end{proof}


Recall that for an irreducible cubic hypersurface $Y$ with dimension $d$, the expected dimension of the variety $F(Y)$ of line is $2d-4$. This is always true for smooth cubic hyperusrface. The dimension of $F(Y)$ may jump if $X$ is singular. 
In the following main theorem, we characterizes the Fano variety of lines on secant cubics. In particular, it has the expected dimension for each case, though the singularity of the secant cubic is relatively large.

\begin{theorem}\label{main-result}
Let $X^n\subset \PP^N, n=\frac{2(N-2)}{3}$ be a Severi variety with $n>2$, and let $SX\subset \PP^N$ be the secant variety. Denote by $F(SX)\subset Gr(2, N+2)$ the variety of lines on the cubic hyperusrface $SX$. For $n>2$ we use $\mathcal{C}^{I},\ldots, \mathcal{C}^{V}$ to denote the strata of lines of type $I,\ldots, V$ respectively. Then we have
\begin{itemize}
	\item If $n>4$, the variety $F(SX)$ is irreducible of the expected dimension $2(N-1)-4=3n-2$.
	\item If $n=4$, the strata of the variety $F(SX)$ satisfies
	\[
    \overline{\mathcal{C}^I}=\mathcal{C}^{I}\cup \mathcal{C}^{III}\cup \mathcal{C}^{IV}\cup\mathcal{C}^{V}, ~\overline{\mathcal{C}^{II}}=\mathcal{C}^{II}\cup \mathcal{C}^{III}\cup \mathcal{C}^{IV}\cup\mathcal{C}^{V}.
	\] 
	As a result, the variety $F(SX)$ has the expected dimension $10$. 
\end{itemize}
\end{theorem}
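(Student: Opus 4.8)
The plan is to determine the dimension and the closure relations of each stratum $\mathcal{C}^{I},\dots,\mathcal{C}^{V}$ by organising them around the relative Grassmannian of lines in the embedded tangent spaces. Write $\PT X\to X$ for the $\PP^n$-bundle of embedded tangent spaces and let $G:=Gr(2,\PT X/X)$ be the relative Grassmannian parametrizing lines $L\subset \PT_x X$. Since $\PT_x X\subset TX=SX$ for every $x$, there is a natural morphism $G\to F(SX)$, $(x,L)\mapsto [L]$; as $G$ is a $Gr(2,n+1)$-bundle over the irreducible $X$, it is irreducible of dimension $n+2(n-1)=3n-2$, which is exactly the expected dimension $2(N-1)-4$. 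The first step is to compute $\dim\mathcal{C}^{I}=3n-2$: by Proposition \ref{lines-in-tangent} a line of type $I$ lies in a \emph{unique} $\PT_x X$, so $G\to F(SX)$ is injective over the open locus of type $I$ lines in $G$; that locus therefore has dimension $3n-2$, and $\mathcal{C}^{I}$ is isomorphic to it.

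For $n>4$, Proposition \ref{lines-in-tangent} asserts that \emph{every} line of $SX$ is contained in some $\PT_x X$. Hence $F(SX)$ equals the image of $G$ under the proper morphism above. Being the image of an irreducible variety, $F(SX)$ is irreducible of dimension at most $3n-2$; combined with $\dim\mathcal{C}^{I}=3n-2$ this forces $\dim F(SX)=3n-2$ and irreducibility, settling the first bullet.

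For $n=4$ I treat the two components separately. Since $G\to F(SX)$ is proper, its image is closed and irreducible and contains the dense locus $\mathcal{C}^{I}$, so it equals $\overline{\mathcal{C}^{I}}$; moreover this image is exactly the set of lines contained in some $\PT_x X$. By Proposition \ref{lines-in-tangent} the only type \emph{not} lying in any embedded tangent space is type $II$, while types $I,III,IV,V$ all do; hence $\overline{\mathcal{C}^{I}}=\mathcal{C}^{I}\cup\mathcal{C}^{III}\cup\mathcal{C}^{IV}\cup\mathcal{C}^{V}$, irreducible of dimension $10$. For the second component I use the concrete model $X=\PP^2\times\PP^2$, where $SX\subset\PP^8$ is the determinantal cubic of singular $3\times 3$ matrices. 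Here the fibres of type $2)$ are cut out inside the maximal linear subspaces of $SX$, namely the two families of $\PP^5$'s $\{M:Mv=0\}$ and $\{M:w^{t}M=0\}$ (indexed by $v,w\in\PP^2$); a non-secant line lying in such a $\PP^5$ is precisely a line of type $II$. Parametrizing $\mathcal{C}^{II}$ as lines in these $\PP^5$'s, the family of maximal subspaces is $2$-dimensional and a generic line lies in a unique one, so $\dim\mathcal{C}^{II}=2+\dim Gr(2,6)=2+8=10$.

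It remains to pin down the closure of $\mathcal{C}^{II}$ and to separate the two components; this is the main obstacle. I will check that every line of types $III$, $IV$, $V$ also lies in one of the maximal $\PP^5$'s (a secant line through two rank-one matrices acquires a common kernel or cokernel, while a line in $X$ or a line meeting $X$ once lies in a ruling plane), so that these three strata all degenerate from generic lines in the $\PP^5$'s and $\overline{\mathcal{C}^{II}}=\mathcal{C}^{II}\cup\mathcal{C}^{III}\cup\mathcal{C}^{IV}\cup\mathcal{C}^{V}$, again of dimension $10$. Finally the two components are genuinely distinct: type $II$ lines lie in no embedded tangent space, so $\mathcal{C}^{II}\cap\overline{\mathcal{C}^{I}}=\emptyset$; conversely the set of lines contained in some maximal linear subspace is closed, since the universal family of lines over the compact parameter space of $\PP^5$'s is proper, and a generic type $I$ line lies in no such subspace, so $\mathcal{C}^{I}\not\subset\overline{\mathcal{C}^{II}}$. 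This gives $F(SX)=\overline{\mathcal{C}^{I}}\cup\overline{\mathcal{C}^{II}}$ with the two stated components, each of dimension $10$. The delicate points to verify are that the type $2)$ fibres are exactly the lines inside the maximal $\PP^5$'s, that a generic such line lies in only one of them (so the count yields $10$ and not less), and that $\mathcal{C}^{III},\mathcal{C}^{IV},\mathcal{C}^{V}$ really lie in both closures simultaneously.
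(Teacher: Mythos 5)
Your handling of the $n>4$ case and of the component $\overline{\mathcal{C}^I}$ for $n=4$ is essentially the paper's own argument: the same relative Grassmannian $Gr(\PT X/X)\ra F(SX)$, the same use of Proposition \ref{lines-in-tangent} for surjectivity (resp.\ for identifying the image with $\mathcal{C}^{I}\cup\mathcal{C}^{III}\cup\mathcal{C}^{IV}\cup\mathcal{C}^{V}$), and the same dimension count $n+2(n-1)=3n-2$. The one substitution is how you get $\dim\mathcal{C}^{I}=3n-2$: the paper (for $n=4$) invokes \cite[Thm.~4.2]{AK77} --- a line disjoint from $\mathrm{Sing}(SX)=X$ is a smooth point of $F(SX)$ of local dimension $2(N-1)-4$ --- whereas you declare the type $I$ locus in $Gr(\PT X/X)$ to be open of full dimension. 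That requires two inputs you do not supply: openness of the type $I$ condition (true, by semicontinuity of $\dim Q_p\cap Q_q$ together with closedness of the locus of lines meeting $X$, via Corollary \ref{position-positive-intersection}) and nonemptiness of that locus (without which ``open'' gives nothing). This is the same leap the paper itself makes with the word ``birational'', so I would not count it as a departure, but it should be said.

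The genuine gap is the second component for $n=4$. The paper proves $\overline{\mathcal{C}^{II}}=\mathcal{C}^{II}\cup\mathcal{C}^{III}\cup\mathcal{C}^{IV}\cup\mathcal{C}^{V}$ abstractly: by Lemma \ref{fibre-projection} and Corollary \ref{position-positive-intersection} every line of type $II$--$V$ lies in some linear fibre $S(u,\Sigma_p)$ of type $2)$, and in the Grassmannian of lines of any such fibre the type $II$ lines form a dense open subset; this yields $\mathcal{C}^{III}\cup\mathcal{C}^{IV}\cup\mathcal{C}^{V}\subset\overline{\mathcal{C}^{II}}$, while the corollary excludes type $I$ lines from the closed locus of lines lying in such fibres. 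You instead pass to the determinantal model $SX=\{\det=0\}\subset\PP^8$ and the maximal subspaces $\{M\colon Mv=0\}$, $\{M\colon w^{t}M=0\}$. That route could be made to work, but as written it is a plan rather than a proof: the three statements you yourself flag as ``delicate points to verify'' --- that type $II$ lines are exactly the non-secant lines in these $\PP^5$'s, that every type $III/IV/V$ line lies in one and is a limit of type $II$ lines there, and that the generic type $II$ line lies in a unique $\PP^5$ --- are precisely the content of the claim, and none is checked. Moreover your exclusion of the first stratum is too weak: you argue only that a \emph{generic} type $I$ line lies in no maximal linear subspace, but the asserted equality requires $\overline{\mathcal{C}^{II}}\cap\mathcal{C}^{I}=\emptyset$, i.e.\ that \emph{no} type $I$ line lies in the closed locus of lines contained in such subspaces. (This does follow from Corollary \ref{position-positive-intersection}: a non-secant line in such a subspace has $\dim Q_p\cap Q_q>0$ for all pairs of its points, hence is of type $II$ by Proposition \ref{5-types-line}; but you need this full statement, not its generic version.) Finally, note that your $\overline{\mathcal{C}^{II}}$ is a union of two $10$-dimensional families, one for each family of $\PP^5$'s, so $F(SX)$ has more than ``the two stated components''; this does not contradict the theorem, which asserts only the closure relations and the dimension, but your concluding sentence claims more than what would have been shown.
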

\begin{proof}
Let $\PT X$ be the embedded tangent bundle
\[
\PT X:=\{(x, p)\in X\times \PP^N ~|~ p\in \PT_x X\}
\]
of $X$ with the natural projections $\pi_X: \PT X\ra X$ and $\psi: \PT X\ra TX$. Let $Gr(\PT X/X)$ denote the Grassmannian of lines relatively to $\PT X\ra X$. It parametrizes lines lying in embedded tangent spaces $\PT_x X, x\in X$. Recall that $TX=SX$ for a Severi variety $X$. Hence the projection $\psi$ induces a morphism
\[
\Psi: Gr(\PT X/X)\ra F(SX).
\] 

If $n>4$, by Proposition \ref{lines-in-tangent} any line on $SX$ is contained in $\PT_x X$ for some $x\in X$, and the point $x$ is unique for lines of type $I$. Therefore the morphism $\Psi$ is surjective and birational in this case. The relative Grassmannian $Gr(\PT X/X)$ of lines is irreducible of dimension
\[
n+2(n-1)=2(N-1)-4.
\] 

If $n=4$, it follows from Proposition \ref{lines-in-tangent} that the image of $\Psi$ is equal to 
\[
\mathcal{C}^{I}\cup \mathcal{C}^{III}\cup \mathcal{C}^{IV}\cup\mathcal{C}^{V}.
\]
This union is irreducible since $Gr(\PT X/X)$ is irreducible with dimension equal to $10$. In order to prove the stratum $\mathcal{C}^{I}$ is Zariski open dense in it, we show that $\dim \mathcal{C}^{I}$ is $10$. Let $\Sigma:=\mathcal{C}^{III}\cup \mathcal{C}^{IV}\cup \mathcal{C}^{V}$. It is known that the point $[\ell]$ is smooth in $F(SX)$ if the line $\ell\subset SX$ does not meet with the singular part of $SX$ (cf. \cite[Thm. 4.2]{AK77}). Recall $SX$ is a cubic $7$-fold. Then the subvariety $F(SX)-\Sigma=\mathcal{C}^{I}\cup \mathcal{C}^{II}$ is smooth with (local) dimension equal to $2\cdot 7-4=10$. As a result, we have $\dim \mathcal{C}^{I}=2\cdot 7-4=10$ and 
\[
\overline{\mathcal{C}^I}=\mathcal{C}^{I}\cup \mathcal{C}^{III}\cup \mathcal{C}^{IV}\cup\mathcal{C}^{V}. 
\]

A line of type $II, III, IV$ or $V$ lies in some fibre of the second form in Lemma \ref{fibre-projection}. Fix one such fibre $S(u, \Sigma_p)$, the set of lines of type $II$ forms an open subset of lines in $S(u, \Sigma_p)$. Hence there is
\[
\overline{\mathcal{C}^{II}}=\mathcal{C}^{II}\cup \mathcal{C}^{III}\cup \mathcal{C}^{IV}\cup\mathcal{C}^{V}.
\]
As a consequence, the variety $F(SX)$ of line is the union $\overline{\mathcal{C}^{I}}\cup \overline{\mathcal{C}^{II}}$ with the expected dimension $10$
\end{proof}

In the rest of the section, we apply of our result to the ``geometric'' Torelli theorem for secant cubic hypersurface. The ``geometric'' Torelli theorem for smooth cubic hypersurfaces, due to F. Charles, is presented in the following.

\begin{theorem}\cite[Prop. 4]{charles12}\label{Charles-geo-Torelli}
Let $k$ be a field of characteristic different from $3$. Let $X$ and $X'$ be two cubic hypersurfaces of dimension $d\geq 3$ over $k$ with isolated singularities. Let $F$ and $F'$ be the Fano variety of lines on $X$ and $X'$ respectively. Let $g: F\ra F'$ be a projective isomorphism with respect to the Pl\"ucker embedding of $F$ and $F'$. Then there exists a projective isomorphism $f: X\ra X'$ inducing $g$.
\end{theorem}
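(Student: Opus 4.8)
The plan is to reconstruct the embedded cubic $X\subset \PP(V)$, together with its projective structure, canonically from the polarized pair $(F,\OO_F(1))$, and to do so in a way that is functorial for polarized isomorphisms; applying the construction to $g$ then produces a linear isomorphism $\PP(V)\xrightarrow{\sim}\PP(V')$ carrying $X$ to $X'$, whose restriction is the desired $f$ inducing $g$. Here $V=H^0(\PP^{d+1},\OO(1))^*$, so $\dim V=d+2$, and $F=F(X)\subset Gr(2,V)\subset \PP(\bigwedge^2 V)$ is the Pl\"ucker picture, with $\OO_F(1)=\OO_{\PP(\bigwedge^2 V)}(1)|_F=\det\mathcal{S}^*_F$, where $\mathcal{S}_F$ is the restriction to $F$ of the tautological rank-two subbundle on $Gr(2,V)$.

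First I would fix the ambient Pl\"ucker space: for $d\geq 3$ and isolated singularities one checks that $F$ is nondegenerate and linearly normal, so that the restriction $\bigwedge^2 V^*=H^0(Gr(2,V),\OO(1))\to H^0(F,\OO_F(1))$ is an isomorphism. This identifies $\PP(H^0(F,\OO_F(1))^*)$ with $\PP(\bigwedge^2 V)$ canonically, and $g$ extends to a linear isomorphism $\phi\colon \PP(\bigwedge^2 V)\xrightarrow{\sim}\PP(\bigwedge^2 V')$ with $\phi|_F=g$ and $\phi(F)=F'$. The hypothesis $d\geq 3$ enters here (and it also guarantees that through every point of $X$ passes a line of $X$, by a dimension count in the $\PP^{d}$ of lines through a fixed point).

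Next I would recover the incidence structure and the points of $X$. Two lines $\ell,\ell'$ meet exactly when the decomposable vectors $\omega_\ell,\omega_{\ell'}\in\bigwedge^2 V$ satisfy $\omega_\ell\wedge\omega_{\ell'}=0$ in $\bigwedge^4 V$; I would show the resulting incidence correspondence $I\subset F\times F$ is intrinsic to $(F,\OO_F(1))$ — either by recovering $Gr(2,V)$ inside $\PP(\bigwedge^2 V)$ (hence the wedge pairing), or by identifying the divisor classes $[\,\{\ell'\mid \ell\cap\ell'\neq\emptyset\}\,]$ with multiples of the polarization and reconstructing the family directly. Granting $I$, a point $x\in X_{\mathrm{sm}}$ is recovered from the subvariety $F_x=\{[\ell]\in F\mid x\in\ell\}$ of pairwise-incident lines; since $F_x=F\cap\Lambda_x$ for the linear space $\Lambda_x=\PP(v\wedge V)\subset Gr(2,V)$ (with $\hat x=\langle v\rangle$), these are exactly the maximal \emph{star-type} cliques of $I$, distinguished from coplanar families by dimension since $\dim\Lambda_x=d>2$. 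The assignment $x\mapsto \Lambda_x$ then embeds $X$ into the $\PP(V)$ of $\alpha$-planes of $Gr(2,V)$, exhibiting $X\subset\PP(V)$ intrinsically, and $\phi$ carries the family $\{\Lambda_x\}$ to $\{\Lambda'_{x'}\}$ and hence $X$ to $X'$ linearly.

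The main obstacle is the middle step: recovering the Grassmannian $Gr(2,V)$ — equivalently the tautological bundle $\mathcal{S}_F$ and the space $V\cong H^0(F,\mathcal{S}_F^*)^*$ — from $(F,\OO_F(1))$ alone. The polarization only pins down $\det\mathcal{S}_F^*$, not $\mathcal{S}_F^*$ itself, so one must characterize the universal line (equivalently $I$, equivalently the two families of maximal linear subspaces of $Gr(2,V)$ meeting $F$) purely in terms of the polarized variety. A secondary difficulty is that $F$ is itself singular over the singular points of $X$ and the universal line degenerates there, so the reconstruction must be performed on the smooth locus and then extended; the isolated-singularities hypothesis is precisely what keeps the bad locus of high enough codimension for this to go through and for the reconstructed $f$ to extend to an isomorphism of the closed cubics.
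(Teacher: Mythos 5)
Your proposal has the right skeleton --- extend $g$ to a linear isomorphism of Pl\"ucker spaces, show that it preserves the Grassmannian, then descend to the cubics --- but it explicitly defers the central step (``the main obstacle''), and that step is precisely where the proof lives. The resolution, both in Charles' argument and in the paper's version of it (the proof of Proposition \ref{Torelli-secant-cubic}), is cohomological rather than incidence-theoretic: $F$ is the zero locus of a section of $E=\Sym^3 S^{\vee}$ on $G=Gr(2,V)$, and because $F$ has the expected dimension $2d-4$ (this is where ``isolated singularities, $d\geq 3$'' enters), that section is \emph{regular}. The associated Koszul complex, twisted by $\OO_G(1)$ and by $\OO_G(2)$, together with the Altman--Kleiman vanishing theorem \cite[Thm. 5.1]{AK77} applied to the terms $H^q(G,\wedge^{-p}E(k))$, yields isomorphisms $H^0(G,\OO_G(1))\overset{\sim}{\ra} H^0(F,\OO_F(1))$ and $H^0(G,\OO_G(2))\overset{\sim}{\ra} H^0(F,\OO_F(2))$. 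The first is exactly the nondegeneracy and linear normality that you assert ``one checks'' --- so your first step also silently relies on the tool you are missing. The second says that the quadrics in $\PP(\wedge^2 V)$ vanishing on $F$ are exactly those vanishing on $G$; since $G$ is cut out by quadrics, $G$ is recovered intrinsically as the intersection of all quadrics through $F$, and the extended linear map therefore carries $G$ to $G'$. This is the intrinsic characterization of the Grassmannian (equivalently, of your incidence correspondence $I$) that your proposal calls for but never supplies; without it the proposal is a plan, not a proof.

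Your endgame also diverges from the paper's, and not to its advantage. Once $G$ is recovered, the paper simply cites the classical theorem that every automorphism of the Grassmannian of lines is induced by an automorphism of $\PP(V)$ \cite[Thm. 10.19]{JoeHarris95}, and then uses that a cubic of dimension $\geq 3$ is covered by its lines to restrict the resulting $f'$ to an isomorphism $X\ra X'$ inducing $g$. Your alternative --- reconstructing points of $X$ as maximal star-type cliques of $I$ --- could in principle replace that citation, but as written it is flawed: the intrinsic object is the clique $F_x$, not the ambient plane $\Lambda_x$, and for $d=3$ the family $F_x$ of lines through a point is \emph{finite} (six lines through a general point of a cubic threefold), so it cannot be distinguished from the finite coplanar cliques (lines of $X$ in a plane $P\not\subset X$) by any dimension count; comparing $\dim\Lambda_x=d$ with $2$ presupposes you have already reconstructed the planes $\Lambda_x$, which is again the unresolved middle step. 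In short, both of the steps you postpone (linear normality, and recovering $G$) are settled by the single Koszul-complex argument, and that argument is the proof.
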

The classical Torelli problem expects that projective manifolds are determined by the Hodge structures. The above theorem simply says the Fano variety of lines on a smooth cubic hypersurface determines the cubicis. Hence the Fano variety of lines plays the similar role of the Hodge structures. The Hodge structure defines for compact K\"ahler manifolds so that the classical Torelli theorem excludes singular varieties. But the variety of lines is always valid, which enables us to study the ``geometric'' Torelli theorem for singular cubics.

\begin{proposition}\label{Torelli-secant-cubic}
Let $X^n\subset \PP^N, n=\frac{2(N-2)}{3}$ be a Severi variety over a field $k$ of characteristic zero, and let $SX\subset \PP^N$ be the cubic secant hypersurface. Let $Y$ be a cubic hypersurface of dimension $N-1$. Denote by $F$ and $F'$ the Fano variety of lines on $SX$ and $Y$ respectively. Suppose there exists an isomorphism $g: (F,\OO_{F}(1))\ra (F', \OO_{F'}(1))$ with respect to the Pl\"ucker polarizations. Then there exists a projective isomorphism $f: SX\ra Y$ inducing $g$.
\end{proposition}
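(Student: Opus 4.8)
The plan is to reconstruct the embedded cubic $SX\subset \PP^N=\PP(V)$ intrinsically from the polarized pair $(F,\OO_F(1))$, and then to show that the reconstruction is functorial enough that the isomorphism $g$ descends to a linear isomorphism of the ambient spaces. One cannot quote Theorem \ref{Charles-geo-Torelli} directly, since $SX$ is singular along all of $X$ and this locus is far from isolated; the point, however, is that the only property of the cubic entering the reconstruction is that its Fano variety of lines is a local complete intersection of the expected dimension in $Gr(2,V)$, and this is exactly what Proposition \ref{lines-secant-Veronese} (for $n=2$) and Theorem \ref{main-result} (for $n>2$) supply. So the strategy is to rerun Charles' reconstruction with the hypothesis ``isolated singularities'' replaced by ``$F$ is lci of the expected dimension''.

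First I would establish linear normality and nondegeneracy of $F$ in the Pl\"ucker space. Writing $\mathcal{S}$ for the tautological rank-$2$ subbundle on $Gr(2,V)$, the variety $F=F(SX)$ is the zero locus of the section $s\in H^0(Gr(2,V),\Sym^3\mathcal{S}^*)$ cut out by the cubic form of $SX$; by Theorem \ref{main-result} (resp. Proposition \ref{lines-secant-Veronese}) this zero locus has the expected codimension $4=\mathrm{rank}\,\Sym^3\mathcal{S}^*$, so it is a local complete intersection and the Koszul complex $\cdots\to \wedge^2 E^*\to E^*\to \OO_{Gr}\to \OO_F\to 0$ of $s$ (with $E=\Sym^3\mathcal{S}^*$) resolves $\OO_F$. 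Twisting by the Pl\"ucker bundle $\OO_{Gr}(1)$ and feeding the resolution into Bott vanishing on the Grassmannian, one obtains $H^0(F,\OO_F(1))\cong \wedge^2 V^*=H^0(Gr(2,V),\OO(1))$; in particular $F$ spans $\PP(\wedge^2 V)$ and is linearly normal. Because $\dim F'=\dim F=2(N-1)-4$ is again the expected dimension, $F'=F(Y)$ is likewise a local complete intersection and linearly normal in its Pl\"ucker space $\PP(\wedge^2 V')$, where $\dim V'=N+1=\dim V$. Hence the polarized isomorphism $g$ identifies $H^0(F,\OO_F(1))$ with $H^0(F',\OO_{F'}(1))$, so $\wedge^2 V^*\cong \wedge^2 (V')^*$, and dualizing yields a linear isomorphism $\bar g\colon \PP(\wedge^2 V)\to \PP(\wedge^2 V')$ with $\bar g(F)=F'$.

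Next I would recover the Grassmannian, the ambient projective space, and finally the cubic. Since $F$ is nondegenerate and linearly normal, the Pl\"ucker Grassmannian $Gr(2,V)$ is reconstructed from $F$ following Charles (via the quadrics vanishing on $F$, or via the maximal linear subspaces of $Gr(2,V)$ cut out on $F$), so $\bar g$, being linear, carries $Gr(2,V)$ onto $Gr(2,V')$. For a Severi variety one has $N\geq 5$, whence $\dim V=\dim V'=N+1\geq 6>4$; thus the linear isomorphism $\bar g$ sending decomposables to decomposables is induced by an honest linear isomorphism $f_0\colon V\xrightarrow{\sim} V'$ (the correlation ambiguity $V\cong (V')^*$ would force $\dim V=4$ and is excluded). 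This gives a projective isomorphism $f\colon \PP^N\to\PP^N$ sending lines to lines and $F$ to $F'$. Finally, $SX$ is swept out by its lines --- already by the secant lines of $X$, which form $\mathcal{C}_1$ in Proposition \ref{lines-secant-Veronese} and the stratum $\mathcal{C}^{IV}$ in Proposition \ref{5-types-line} --- so $SX=\bigcup_{\ell\in F}\ell$; likewise $\bigcup_{\ell\in F'}\ell$ is a closed subvariety of $Y$ of dimension $N-1$, hence equals the irreducible $Y$. As $f$ matches the two families of lines, $f(SX)=Y$ and $f$ induces $g$, as required.

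The step I expect to be the main obstacle is the cohomological proof of linear normality: because $SX$ is singular along the whole of $X$, it is not a priori clear that $F$ spans $\PP(\wedge^2 V)$ and carries no ``extra'' sections of $\OO_F(1)$, and it is precisely here that the local-complete-intersection conclusion of Theorem \ref{main-result} (and Proposition \ref{lines-secant-Veronese} for $n=2$) is indispensable, since it makes the Koszul resolution exact and the Bott-type vanishing applicable. A secondary delicate point is the reconstruction of $Gr(2,V)$ from $F$ and the verification that $\bar g$ respects it; but once linear normality and the lci property are in hand these are formal and follow Charles' argument verbatim.
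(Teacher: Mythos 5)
Your proposal is correct and follows essentially the same route as the paper: both rerun Charles' reconstruction using the fact (from Proposition \ref{lines-secant-Veronese} and Theorem \ref{main-result}) that $F$ is a local complete intersection of expected dimension, so the Koszul complex of the regular section of $\Sym^3\mathcal{S}^{\vee}$ plus the vanishing theorems of Altman--Kleiman identify the quadrics through $F$ with those through the Grassmannian, whence the polarized isomorphism extends to an automorphism of the Grassmannian, hence of $\PP(V)$, and one concludes by sweeping both cubics out by their lines. The only (welcome) difference is that you spell out the linear-normality step $H^0(F,\OO_F(1))\cong \wedge^2 V^*$ needed to extend $g$ to the Plücker space, which the paper's proof leaves implicit.
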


\begin{proof}
The proof basically repeats the proof of Theorem \ref{Charles-geo-Torelli}. The main input from Theorem \ref{main-result} is that $F(SX)$ has the expected dimension. Therefore $F(SX)$ is a local complete intersection of the Grassmannian of lines, and we can apply the Koszul complex method in Charles' argument. 

A projective isomorphism $SX\ra Y$ automatically induces a polarized isomorphism between the Fano varieties of lines. 

Let $V$ be an $(N+1)$-dimensional vector space so that $\PP^N=\PP(V)$. The polarized isomorphism $g: (F,\OO_{F}(1))\ra (F', \OO_{F'}(1))$ yields an automorphism $g'$ on $\PP(\wedge^2 V)$ fitting into the diagram
\[
\begin{tikzcd}
F \ar[r, hook] \ar[d, "g"']& \PP(\wedge^2 V) \ar[d, "g'"'] \\
F' \ar[r, hook] & \PP(\wedge^2 V).
\end{tikzcd}
\]
Let $G$ be the Grassmannian of lines on $\PP(V)$. We intend to show the automorphism $g'$ restricts to an automorphism of $G$. Let $S$ be the tautological rank two vector bundle on the Grassmannian $G$ of lines. The variety $F$ is the zero set of a section of the vector bundle $E:=Sym^3 S^{\vee}$ on $G$. By Theorem \ref{main-result} the dimension of $F$ is of the expected dimension, thus the section defining $F$ is a regular section, see \cite[Thm. 3.3 (iii)]{AK77}. Associated to a regular section is the Koszul complex
\[
\cdots \ra \wedge^2 E\ra \wedge^1 E\ra \OO_{G}\ra \OO_F\ra 0.
\]
By twisting the Koszul complex by $\OO(2)$ we obtain the spectral sequence 
\[
E^{p,q}_1:=H^q(G, \wedge^{-p} E(2))\Rightarrow H^{p+q}(F, \OO_F(2)).
\]
It follows from \cite[Thm. 5.1]{AK77} that 
\[
H^q(G, \wedge^{-p} E(2))=0, \text{~unless~} q=p=0,
\]
which implies $H^0(G, \OO_G(2))\overset{\sim}{\ra} H^0(F, \OO_F(2))$. The same conclusion holds for $F'$. As a consequence, the quadrics on $\PP(\wedge^2 V)$ defining $G$ is exactly the quadrics that vanishes on $F$ (resp. $F'$). Hence the autromorphism $g'$ sends $G$ to itself. 

Any automorphism on the Grassmannian of lines is induced by an autormophism of the projective spaces, see \cite[Thm. 10.19]{JoeHarris95}. Hence there exists an automorphism $f'$ on $\PP^N$ inducing $g'|_{G}$. The automorphism $f'$ gives an isomorphism $\Phi : P\ra P'$ where $P$ (resp. $P'$) is the incidence variety of lines on $SX$ (resp. $Y$) so that for any pair $(x, [\ell])\in P, x\in \ell\subset SX$ the morphism $\Phi$ is given by 
\[
\Phi(x, [\ell])=(f'(x), g([\ell])).
\]
Notice that the cubics $SX$ and $Y$ are dominant by the incidence varieties $P$ and $P'$ respectively. Therefore the isomorphism $\Phi$ or $f'$ restricts to an isomorphism $f: SX\ra Y$ inducing $g$.
\end{proof}

\end{document}